\newcommand{\RootPath}{.}
\newcommand{\ExternalFiguresPath}{\RootPath/figures}
\newcommand{\eop}{\hspace*{\fill}~$\square$} 
\theoremstyle{plain}
\numberwithin{equation}{section}
\newtheorem{theorem}{Theorem}[section]
\newtheorem{conjecture}[theorem]{Conjecture}
\newtheorem{proposition}[theorem]{Proposition}
\newtheorem{lemma}[theorem]{Lemma}
\theoremstyle{definition}
\newtheorem{Remark}[theorem]{Remark}
\newenvironment{remark}{\begin{Remark}}{\eop\end{Remark}}
\newcommand{\mycaption}[1]{\centering{\vspace{\medskipamount}\refstepcounter{figure}\textbf{Figure~\thefigure.} {#1}}}
\newcommand{\natur}{\ensuremath{\mathbb{N}}}
\newcommand{\real}{\ensuremath{\mathbb{R}}}
\newcommand{\setcond}[2]{\left\{ #1 : #2 \right\}} 
\newcommand{\bigsetcond}[2]{\bigl\{ #1 \,  :\, #2 \bigr\}}
\newcommand{\Bigsetcond}[2]{\Bigl\{ #1 \,:\, #2 \Bigr\}}
\newenvironment{FigTab}[2]{
	\begin{figure}[htb]
	\setlength{\unitlength}{#2}
	\begin{center}
	\begin{tabular}{#1}
}{
    \end{tabular}
    \end{center}
    \end{figure}
}
\newcommand{\IncludeGraph}[2]{
	\includegraphics[#1]{\ExternalFiguresPath/{#2}}
}
\newcommand{\diam}{\mathop{\mathrm{diam}}\nolimits}
\newcommand{\intr}{\mathop{\mathrm{int}}\nolimits}
\newcommand{\overtwocond}[2]{\substack{{#1} \\ {#2}}} 
\newcommand{\dotvar}{\,\cdot\,} 
\newcommand{\MxL}{\left[} 
\newcommand{\MxR}{\right]} 
\newcommand{\ThmTitle}[2][]{\ifthenelse{\equal{#1}{}}{\emph{(#2)}}{\emph{(#2; #1)}}}
\newcommand{\notion}[2][]{\emph{#2}\xspace} 
\newcommand{\mfor}{\ \mbox{for} \ }
\newcommand{\mand}{\ \mbox{and} \ }
\newcommand{\sti}{\mathop{s_0}} 
\newcommand{\stic}{\mathop{s}} 
\newcommand{\eps}{\varepsilon}
\newcommand{\card}[1]{|#1|}
\newcommand{\cX}{X}
\newcommand{\mycite}[2]{\ifthenelse{\equal{#2}{}}{\cite{#1}}{\cite[#2]{#1}}\xspace}
\newcommand{\GroetschelHenk}[1][]{\mycite{MR1976602}{#1}}
\newcommand{\BosseGroetschelHenk}[1][]{\mycite{MR2166533}{#1}}
\newcommand{\Bernig}[1][]{\mycite{Bernig98}{#1}}
\newcommand{\Ziegler}[1][]{\mycite{MR1311028}{#1}}
\newcommand{\RAGbook}[1][]{\mycite{MR1659509}{#1}}
\newcommand{\BasuPollackRoy}[1][]{\mycite{MR2248869}{#1}}
\newcommand{\SchnBk}[1][]{\mycite{MR94d:52007}{#1}} 
\newcommand{\AverkovHenk}[1][]{\mycite{AveHenkRepSimplePolytopes}{#1}}
\begin{document}
\title{Representing Elementary Semi-Algebraic Sets\\ by a Few Polynomial Inequalities: \\A Constructive Approach}
\date{\small \today}
\author{\small Gennadiy Averkov\footnote{Work supported by the German Research Foundation within the Research Unit 468 ``Methods from Discrete Mathematics for the Synthesis and Control of Chemical Processes''.}}
\maketitle

\begin{abstract} 
	Let $P$ be an elementary closed semi-algebraic set in $\real^d$, i.e., there exist real polynomials $p_1,\ldots,p_s$ ($s \in \natur$) such that $P= \setcond{x \in \real^d}{p_1(x) \ge 0, \ldots, p_s(x) \ge 0}$; in this case $p_1,\ldots,p_s$ are said to represent $P$. Denote by $n$ the maximal number of the polynomials from $\{p_1,\ldots,p_s\}$ that vanish in a point of $P.$  If $P$ is non-empty and bounded, we show that it is possible to construct $n+1$ polynomials representing $P.$ Furthermore, the number $n+1$ can be reduced to $n$ in the case when the set of points of $P$ in which $n$ polynomials from $\{p_1,\ldots,p_s\}$ vanish is finite. Analogous statements are also obtained for elementary open semi-algebraic sets. 
\end{abstract} 

\newtheoremstyle{itdot}{}{}{\mdseries\rmfamily}{}{\itshape}{.}{ }{}
\theoremstyle{itdot}
\newtheorem*{msc*}{2000 Mathematics Subject Classification} 

\begin{msc*}
  Primary: 14P10, Secondary: 14Q99, 03C10, 90C26 
\end{msc*}

\newtheorem*{keywords*}{Key words and phrases}

\begin{keywords*}
Approximation, elementary symmetric function, {\L}ojasiewicz's Inequality, po\-lynomial optimization, semi-algebraic set, Theorem of Br\"{o}cker and Scheiderer
\end{keywords*}

\section{Introduction} 

In what follows $x:=(x_1,\ldots,x_d)$ is a variable vector in $\real^d$ ($d \in\natur$). As usual, $\real[x]:=\real[x_1,\ldots,x_d]$ denotes the ring of polynomials in variables $x_1,\ldots,x_d$ and coefficients in $\real.$ A subset $P$ of $\real^d$ which can be represented by
\begin{equation} \label{P:rep} 
	P= (p_1,\ldots,p_s)_{\ge 0}:=\setcond{x \in \real^d}{p_1(x) \ge 0,\ldots, p_s(x) \ge 0}
\end{equation}
for $p_1,\ldots,p_s \in \real[x]$ ($s \in \natur$) is said to be an \notion{elementary closed semi-algebraic set} in $\real^d.$ Clearly, the number $s$  from \eqref{P:rep} is not uniquely determined by $P.$ Let us denote by $\stic(d,P)$ the minimal $s$ such that \eqref{P:rep} is fulfilled for appropriate $p_1,\ldots,p_s \in \real[x].$ Analogously, a subset $P_0$ of $\real^d$ which can be represented by 
\begin{equation} \label{P0:rep} 
	P_0=(p_1,\ldots,p_s)_{> 0}:=\setcond{x \in \real^d}{p_1(x) > 0,\ldots, p_s(x) > 0}
\end{equation} 
for some $p_1,\ldots, p_s \in \real[x]$ ($s \in \natur$) is said to be an \notion{elementary open semi-algebraic set} in $\real^d.$ The quantity $\sti(d,P_0)$ associated to $P_0$ is introduced analogously to $\stic(d,P).$ The system of polynomials $p_1,\ldots,p_s$ from \eqref{P:rep}  (resp. \eqref{P0:rep}) is said to be a \notion[polynomial representation]{polynomial representation} of $P$ (resp. $P_0$).  From the well-known Theorem of Br\"ocker and Scheiderer (see \cite[Chapter~5]{MR1393194},  and \RAGbook[\S6.5, \S10.4] and the references therein) it follows that, for $P$ and $P_0$ as above, the following inequalities are fulfilled:
\begin{eqnarray}
	\stic(d,P) & \le & d(d+1)/2, \label{07.12.04,21:17} \\
	\sti(d,P_0) & \le & d.  \label{07.12.04,21:18}
\end{eqnarray} 
Both of these inequalities are sharp. It should be emphasized that all known proofs of \eqref{07.12.04,21:17} and \eqref{07.12.04,21:18} are highly non-constructive. The main aim of this paper is to provide constructive upper bounds for $\stic(d,P)$ and $\sti(d,P_0)$ for certain classes of $P$ and $P_0$; see also \cite{vomHofe}, \Bernig, \GroetschelHenk, \cite{Henk06PolRep}, \BosseGroetschelHenk, and \AverkovHenk for previous results on this topic. We also mention that constructive results on polynomial representations of special semi-algebraic sets are related to polynomial optimization; see \cite{MR1814045}, \cite{MR2011395}, \cite{MR2142861}, \cite{laurent-08}, and \cite{hel-nie-08}.

Let $p_1,\ldots,p_s \in \real[x]$ and let $P:=(p_1,\ldots,p_s)_{\ge 0}$ be non-empty. The assumptions of our main theorems are formulated in terms of the following functionals, which depend on $p_1,\ldots,p_s$. The functional
\begin{equation}
		I_x(p_1,\ldots,p_s) := \bigsetcond{i= 1,\ldots,s}{p_i(x) = 0}, \ x \in P, \label{07.11.22,17:53} \\
\end{equation}
determines the set of constraints defining $P$ which are ``active'' in $x.$ Furthermore, we define
	\begin{eqnarray}
		n(p_1,\ldots,p_s) &:=& \max \setcond{ \card{I_x(p_1,\ldots,p_s)} }{x \in P} , \label{07.11.22,17:54} \\
		\cX(p_1,\ldots,p_s) &:= & \bigsetcond{ x \in P}{ \card{I_x(p_1,\ldots,p_s)} =n(p_1,\ldots,p_s)}, \label{07.11.22,17:55}
	\end{eqnarray}
where $|\dotvar|$ stands for the cardinality.
 The geometric meaning of $n(p_1,\ldots,p_s)$ and $\cX(p_1,\ldots,p_s)$ can be illustrated by the following special situation. Let $P$ be a $d$-dimensional polytope with $s$ facets (see  \Ziegler 
 for information on polytopes). Then $P$ can be given by \eqref{P:rep} with all $p_i$ having degree one (the so-called \notion{H-representation}). In this case $n(p_1,\ldots,p_s)$ is the maximal number of facets of $P$ having a common vertex and $\cX(p_1,\ldots,p_s)$ is the set consisting of those vertices of $P$ which are contained in the maximal number of facets of $P.$ If the polytope $P$ is \notion{simple} (that is, each vertex of $P$ lies in precisely $d$ facets), then $n(p_1,\ldots,p_s)=d$ and $\cX(p_1,\ldots,p_s)$ is the set of all vertices of $P.$ 
 
  Now we are ready to formulate our main results.

\begin{theorem} \label{main:n+1}
	Let $p_1,\ldots, p_s \in \real[x]$, $P := (p_1,\ldots,p_s)_{\ge 0}$, and $P_0 := (p_1,\ldots,p_s)_{> 0}.$ 
	Assume that $P$ is non-empty and bounded, and $n:=n(p_1,\ldots,p_s) < s.$ Then the following inequalities are fulfilled:
	\begin{equation*}
		\begin{array}{ccc}
		\stic(d,P) \le n+1, & &
		\sti(d,P_0) \le n+1
		\end{array}
	\end{equation*}
	Furthermore, there exists an algorithm that gets $p_1,\ldots,p_s$ and returns $n+1$ polynomials $q_0,\ldots,q_{n} \in \real[x]$ satisfying $P = (q_0,\ldots,q_{n})_{\ge 0}$ and  $P_0 = (q_0,\ldots,q_{n})_{> 0}.$	\eop
\end{theorem}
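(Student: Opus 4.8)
The plan is to construct $n+1$ polynomials whose joint sign pattern follows that of $p_1,\dots,p_s$ tightly enough that the \emph{same} list simultaneously represents $P$ (with the sign ``$\ge 0$'') and $P_0$ (with ``$>0$''); effectivity will then be automatic, since every step below — computing symmetric functions, bounding the solution set of the system $p_i\ge0$, estimating a {\L}ojasiewicz exponent, producing a polynomial approximation on a box — is algorithmic.

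\emph{Step 1 (elementary symmetric functions).} Let $e_k\in\real[x]$ be the $k$-th elementary symmetric polynomial of $p_1,\dots,p_s$, for $k=1,\dots,s$, and put $e_0:=1$. From $\prod_{i=1}^s\bigl(T-p_i(x)\bigr)=\sum_{k=0}^s(-1)^ke_k(x)\,T^{\,s-k}$ and the substitution $T=-u$, which turns the right-hand side into $(-1)^s\sum_{k=0}^se_k(x)\,u^{\,s-k}$, one sees that if $e_1(x),\dots,e_s(x)\ge0$ then $\prod_i(T-p_i(x))$ has no root in $(-\infty,0)$, i.e.\ $p_i(x)\ge0$ for all $i$; similarly $e_1(x),\dots,e_s(x)>0$ gives no root in $(-\infty,0]$, i.e.\ $p_i(x)>0$ for all $i$. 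The reverse implications being obvious, $P=(e_1,\dots,e_s)_{\ge0}$ and $P_0=(e_1,\dots,e_s)_{>0}$.

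\emph{Step 2 (isolating the rigid block).} Put $m:=s-n$. Since $P$ is bounded, nonempty, and closed, it cannot also be open, so $n\ge1$ and hence $1\le m\le s-1$. At any $x\in P$ at least $m$ of the numbers $p_1(x),\dots,p_s(x)$ are strictly positive, whence $e_k(x)>0$ for every $k\le m$, while $e_{m+1},\dots,e_s$ may vanish on $P$. Let $C:=(e_{m+1},\dots,e_s)_{\ge0}$ — a set cut out by exactly $n$ polynomials, with $P\subseteq C$ — and $T:=C\setminus P$. If $y\in\overline T\cap P$, then $y$ is a limit of points of $T\subseteq C$, each of which violates $e_k\ge0$ for some index $k$ that is necessarily $\le m$; by the pigeonhole principle and continuity, $e_{k_0}(y)\le0$ for some fixed $k_0\le m$, contradicting $e_{k_0}(y)>0$. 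Thus $\overline T\cap P=\emptyset$, and since $P$ is compact and $\overline T$ is closed, $\operatorname{dist}(P,\overline T)>0$.

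\emph{Step 3 (the extra polynomial, and the main obstacle).} It now suffices to produce \emph{one} polynomial $q_0$ with $q_0>0$ on $P$ and $q_0<0$ on $\overline T$; then $q_0,e_{m+1},\dots,e_s$ are the required $n+1$ polynomials. Indeed $P\subseteq(q_0,e_{m+1},\dots,e_s)_{\ge0}$ is clear, and if $x$ lies in the right-hand set but not in $P$ then $x\in C\setminus P=T\subseteq\overline T$, forcing $q_0(x)<0$, a contradiction; hence $P=(q_0,e_{m+1},\dots,e_s)_{\ge0}$. For the open version: if $q_0(x)>0$ and $e_{m+1}(x),\dots,e_s(x)>0$, the closed case gives $x\in P$, so $e_1(x),\dots,e_m(x)>0$ by Step 2 and then $x\in P_0$ by Step 1; combined with $P_0\subseteq P\subseteq\{x:q_0(x)>0\}$ this yields $P_0=(q_0,e_{m+1},\dots,e_s)_{>0}$. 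This proves $\stic(d,P)\le n+1$ and $\sti(d,P_0)\le n+1$, and gives the algorithm once the construction of $q_0$ is made effective. The hard part is precisely the construction of $q_0$: separating the compact set $P$ from the possibly \emph{unbounded} closed set $\overline T$ by a single polynomial, while keeping every inequality strict. Here boundedness, approximation, and {\L}ojasiewicz's inequality come together. Fix $R>0$ with $P$ well inside the ball $\{x\in\real^d:\enorm{x}\le R\}$; approximate on that ball, by the Stone--Weierstrass theorem, a continuous semi-algebraic function that is $\ge1$ on $P$ and $\le-1$ on the part of $\overline T$ lying in it; then correct the approximant by adding a term $c\,(R^2-\enorm{x}^2)^{2\ell+1}$ with $\ell$ large and $c>0$ small, so as to force $q_0<0$ near infinity without disturbing the signs already achieved. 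The delicate points — and the reason a crude Stone--Weierstrass estimate is not enough — are (i) controlling the sign of $q_0$ in the transition zone, where $\overline T$ may come close to the boundary of the approximation ball, and (ii) preserving \emph{strict} inequalities everywhere; {\L}ojasiewicz's inequality, in its effective form, supplies the quantitative estimates on how fast the relevant functions vanish near $\bd P$ that are needed to fix the precision of the approximation, the exponent $\ell$, and the constant $c$.
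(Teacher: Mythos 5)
Your Steps 1 and 2, and the reduction at the start of Step 3, are correct and coincide with the paper's combinatorial skeleton: the top $n$ elementary symmetric functions $\sigma_{s-n+1},\dots,\sigma_{s}$ of $p_1,\dots,p_s$ represent $P$ ``locally'' (since $\sigma_1,\dots,\sigma_{s-n}>0$ on $P$ when $n<s$), and everything reduces to producing one additional polynomial $q_0$ with $q_0\ge 0$ on $P$ and $q_0<0$ on $T=C\setminus P$. The genuine gap is that this remaining construction --- which is the actual analytic content of the theorem --- is not carried out, and the sketch you give would fail. With $q_0=h+c\,(R^2-\enorm{x}^2)^{2\ell+1}$, on and just outside the sphere $\enorm{x}=R$ the correction term vanishes (to high order), so there the sign of $q_0$ is the sign of the Stone--Weierstrass approximant $h$; but $h$ is only controlled on $P$ and on $\overline{T}\cap B^d(o,R)$. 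Since $C$ is in general unbounded, $T$ can have pieces lying in a thin shell $\{R<\enorm{x}<R+\eta\}$ (for instance a whole connected component of $C$ sitting just outside the sphere), and at such points neither the approximation estimate nor the correction forces $q_0<0$. Your appeal to {\L}ojasiewicz ``near $\bd P$'' does not address this: near $P$ you already have room because $\mathop{\mathrm{dist}}(P,\overline{T})>0$; the difficulty is global, in the transition zone and at infinity. (Also, Stone--Weierstrass is not by itself an algorithm, so the claim that effectivity is automatic is not justified.)

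For comparison, the paper resolves exactly this point by a different device: it takes $q_0=1-g_{M,\lambda,k}$ with $g_{M,\lambda,k}(x)=\frac1s\sum_{i=1}^s\bigl(1-\tfrac1\lambda(1+\enorm{x}^2)^M p_i(x)\bigr)^{2k}$. {\L}ojasiewicz's inequality is applied \emph{at infinity} (to the decay of $\min_i p_i$ outside the unit ball, Theorem~\ref{semi:approx}) to produce the weight exponent $M$ for which the relaxed set $P(M,\eps)$ is bounded and converges to $P$ in the Hausdorff distance; then $\eps$ is chosen so small that $\sigma_1,\dots,\sigma_{s-n}>0$ on all of $P(M,\eps)$, and Lemma~\ref{sublevel:approx:part} squeezes $\{x:g_{M,\lambda,k}(x)\le 1\}$ between $P$ and $P(M,\eps)$. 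This yields in one stroke a single polynomial inequality $q_0\ge0$ whose solution set contains $P$, is bounded, and lies inside a neighborhood on which Proposition~\ref{BernigLemma} closes the argument (in particular $q_0<0$ on $T$ automatically, since $T\cap P(M,\eps)=\emptyset$); the parameters $M,\lambda,k,\eps$ are then found by Tarski--Seidenberg search, which is what makes the theorem constructive. To salvage your plan you would need an ingredient of this kind --- a global weight such as $(1+\enorm{x}^2)^M$ justified by {\L}ojasiewicz at infinity --- rather than an approximation on a fixed ball plus a radial correction.
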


In the case when $\cX(p_1,\ldots,p_s)$ is finite Theorem~\ref{main:n+1} can be improved. 

\begin{theorem} \label{main:n}
	Let $p_1,\ldots, p_s \in \real[x]$, $P := (p_1,\ldots,p_s)_{\ge 0}$, and $P_0 := (p_1,\ldots,p_s)_{> 0}.$ 	Assume that $P$ is non-empty and bounded,  $\cX:=\cX(p_1,\ldots,p_s)$ is finite, and $n:=n(p_1,\ldots,p_s) < s$. Then the following inequalities are fulfilled: 
	\begin{equation*}
		\begin{array}{ccc}
		\stic(d,P) \le n, & &
		\sti(d,P_0) \le n
		\end{array}
	\end{equation*}
	Furthermore, there exists an algorithm that gets $p_1,\ldots,p_s$ and $\cX$ and returns $n$ polynomials $q_1,\ldots,q_{n}$ satisfying $P = (q_1,\ldots,q_{n})_{\ge 0}$ and  $P_0 = (q_1,\ldots,q_{n})_{> 0}.$	\eop
\end{theorem}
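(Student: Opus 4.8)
\emph{Proof plan.} The plan is to bootstrap from Theorem~\ref{main:n+1} and then to eliminate one of the $n+1$ polynomials it produces; the elimination becomes possible precisely because the hypothesis that $\cX(p_1,\ldots,p_s)$ is finite forces a certain corner set to be finite. Note first that $n\ge 1$: otherwise no $p_i$ vanishes on $P$, so $P=P_0$ is both closed and open and hence, being non-empty, equals $\real^d$, contradicting boundedness.

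I would begin by applying Theorem~\ref{main:n+1} (its hypotheses hold, since $n<s$) to get $q_0,\ldots,q_n\in\real[x]$ with $P=(q_0,\ldots,q_n)_{\ge 0}$ and $P_0=(q_0,\ldots,q_n)_{>0}$. Put $Q:=(q_2,\ldots,q_n)_{\ge 0}\supseteq P$ and $Q_0:=(q_2,\ldots,q_n)_{>0}\supseteq P_0$, so that $P=Q\cap\{q_0\ge 0\}\cap\{q_1\ge 0\}$ and $P_0=Q_0\cap\{q_0>0\}\cap\{q_1>0\}$. It then suffices to construct a single $r\in\real[x]$ with $P=(q_2,\ldots,q_n,r)_{\ge 0}$ and $P_0=(q_2,\ldots,q_n,r)_{>0}$: renaming, this is the desired system of $n$ polynomials. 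The naive choice $r:=q_0q_1$ satisfies $\{q_0q_1\ge 0\}\cap Q=P\cup D$, where $D:=\bigl(Q\cap\{q_0\le 0,\,q_1\le 0\}\bigr)\setminus P$ is a ``ghost region'' on which $q_0q_1\ge 0$; so the task is to perturb $q_0q_1$ into an $r$ that still detects $P$ but discards $D$.

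The key observation is that $D$ is almost disjoint from $P$: a point of $P$ in the closure of $D$ satisfies $q_0=q_1=0$, hence lies in the corner set $Z:=\{x\in P:q_0(x)=q_1(x)=0\}$. I would check --- by inspecting the construction behind Theorem~\ref{main:n+1} --- that $q_0$ and $q_1$ may be chosen (re-indexing if necessary) so that $Z$ is finite, say $Z=\{a_1,\ldots,a_m\}$ with $m\ge 0$; the mechanism is that simultaneous vanishing on $P$ of these particular polynomials forces enough of the original $p_i$ to vanish, hence membership in the finite set $\cX(p_1,\ldots,p_s)$. Thus $P$ and $D$ meet only at the finitely many points $a_j$.

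I would then take $r:=q_0q_1-c$ with a correction $c\in\real[x]$ arranged so that $c\le q_0q_1$ on $P$ and $c>q_0q_1$ on $Q\setminus P$, which is exactly what yields $P=\{x\in Q:r(x)\ge 0\}$; the open identity $P_0=\{x\in Q_0:r(x)>0\}$ follows from the strict counterparts of these inequalities. Since $P$ and $D$ touch only at the $a_j$, the correction $c$ can be built from $q_0$, $q_1$, the factor $\prod_{j=1}^m\|x-a_j\|^2$ that absorbs the contact points (this factor being $1$ when $m=0$), and a term coming from the boundedness of $P$. The extreme case $n=1$ --- where $\partial P$ is finite and $P$ is cut out by a single product-type inequality built from such squared distances --- is the model. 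The step I expect to be the main obstacle is precisely the construction of $c$ and the effective choice of the parameters in it: away from $Z$ the required sign conditions are a compactness matter, but near each $a_j$ the demands ``$c>q_0q_1$ on the $D$ side'' and ``$c\le q_0q_1$ on the $P$ side'' compete, and reconciling them calls for a {\L}ojasiewicz inequality on the compact set $P$ (and on a compact semi-algebraic neighbourhood of $D$) bounding $|q_0q_1|$ from below by a power of the distance to $Z$. Finally, the output of Theorem~\ref{main:n+1}, the finite set $Z\subseteq\cX$, the {\L}ojasiewicz exponent, and the parameters in $c$ are all effectively computable from $p_1,\ldots,p_s$ and $\cX$, which produces the asserted algorithm. \eop
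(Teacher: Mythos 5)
Your overall strategy (bootstrap from Theorem~\ref{main:n+1} and merge $q_0,q_1$ into a single polynomial $r=q_0q_1-c$) is a legitimate reshuffling of the paper's idea, and your identification of the ``ghost region'' $D$ and of the finite contact set is sound. But there is a genuine gap at the point you dismiss as routine: you assert that ``away from $Z$ the required sign conditions are a compactness matter'' and speak of a \emph{compact} semi-algebraic neighbourhood of $D$. The set $Q=(q_2,\ldots,q_n)_{\ge 0}$ is in general unbounded --- the $q_i$ for $i\ge 2$ are elementary symmetric functions of $p_1,\ldots,p_s$, and only the extra inequality $q_0\ge 0$ cuts the intersection down to the bounded set $P$ --- so $Q\setminus P$, and in particular $D$, is typically unbounded, and on $D$ the product $q_0q_1$ grows without bound. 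Hence you must arrange $c>q_0q_1$ along an unbounded set while keeping $c\le q_0q_1$ on $P$; this at-infinity domination is precisely the hard analytic content of the paper's proof, handled there by the factor $(1+\|x\|^2)^M$ built into $g_{M,\lambda,k}$ in \eqref{g:def} (Theorem~\ref{semi:approx}; Remark~\ref{bounded:extension:remark} shows $M=0$ can genuinely fail) together with the explicit estimate in Theorem~\ref{07.12.10,11:53} bounding $|\sigma_{s-n+1}(p_1(x),\ldots,p_s(x))|$ by a constant times $g(x)^{s-n+1}$ outside $P(M,2\eps)$, so that $g^l h_\mu^m$ dominates for large $l$. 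Your sketch contains no mechanism playing this role, and without it the construction of $c$ does not go through.

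Two smaller points. First, the {\L}ojasiewicz inequality you invoke --- a lower bound for $|q_0q_1|$ on $P$ by a power of the distance to $Z$ --- is false whenever $\cX\setminus Z\ne\emptyset$, because $q_1=\sigma_{s-n+1}(p_1,\ldots,p_s)$ vanishes at \emph{every} point of $\cX$; the comparison must be made, as in the paper, between $\sigma_{s-n+1}$ and powers of $\|x-w\|$ near each $w\in\cX$. Second, with the polynomials actually produced by Theorem~\ref{main:n+1} one has $q_0=1-g>0$ on all of $P$ (since $n<s$), so $Z=\emptyset$ and your local contact analysis is vacuous: the entire difficulty of the theorem sits in the unbounded region you set aside. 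For comparison, the paper does not bootstrap from Theorem~\ref{main:n+1} at all; it directly takes $q_1:=\sigma_{s-n+1}(p_1,\ldots,p_s)-g_{M,\lambda,k}^l h_\mu^m$ and $q_i:=\sigma_{s-n+i}(p_1,\ldots,p_s)$ for $2\le i\le n$, and combines Theorem~\ref{07.12.10,11:53} with Proposition~\ref{BernigLemma} and a choice of $\eps$ making the lower symmetric functions positive on $P(M,2\eps)$.
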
 

 Below we discuss existing results and problems related to Theorems~\ref{main:n+1} and \ref{main:n}. Let $P$ be a convex polygon in $\real^2$ with $s$ edges,  which is given by  \eqref{P:rep} with all $p_i$ having degree one. Bernig \Bernig  showed that  setting $q_2:=p_1 \cdot \ldots \cdot p_s$ one can construct a strictly concave polynomial $q_1(x)$ vanishing on all vertices of $P$ which satisfies $P= (q_1,q_2)_{\ge 0}$ ; see Fig.~\ref{simp:polyt:2d:fig}. As it will be seen from the proof of Theorem~\ref{main:n}, for the case $d=2$ and $P$ as in Theorem~\ref{main:n} we also set $q_2 := p_1 \cdot \ldots \cdot p_s$ and choose $q_1$ in such a way that it vanishes on each point of $X$ and the set $(q_1)_{\ge 0}$ approximates $P$ sufficiently well; see Fig.~\ref{Fig:semi-alg:2d}. However, since $P$ from Theorem~\ref{main:n} is in general not convex, the construction of $q_1$ requires a different idea. The statement of Theorem~\ref{main:n} concerned with $P_0$ and restricted to the cases $n=2$ and $n=d$, $s=d+1$ (with slightly different assumptions on $P_0$) was obtained by Bernig \Bernig[Theorems~4.1.1 and 4.3.5].

	\begin{FigTab}{c}{0.6mm}
	\begin{picture}(200,50)
	\put(60,2){\IncludeGraph{width=50\unitlength}{BernigSetP1.eps}}
	\put(0,2){\IncludeGraph{width=50\unitlength}{BernigSetP0.eps}}
	\put(150,2){\IncludeGraph{width=50\unitlength}{BernigP.eps}}
	\put(125,25){\IncludeGraph{width=12\unitlength}{RightArrow.eps}}
	\put(20,25){\scriptsize $(q_1)_{\ge 0}$}
	\put(80,25){\scriptsize $(q_2)_{\ge 0}$}
	\put(170,25){\scriptsize $P$}
	\end{picture}
	\\
	\parbox[t]{0.98\textwidth}{\mycaption{Illustration to the result of Bernig on convex polygons\label{simp:polyt:2d:fig}}} 
	\end{FigTab}

	\begin{FigTab}{c}{0.7mm}
	\begin{picture}(200,50)
	\put(60,2){\IncludeGraph{width=50\unitlength}{SetP1.eps}}
	\put(0,2){\IncludeGraph{width=50\unitlength}{SetP0.eps}}
	\put(150,2){\IncludeGraph{width=50\unitlength}{SetP.eps}}
	\put(125,25){\IncludeGraph{width=12\unitlength}{RightArrow.eps}}
	\put(20,25){\scriptsize $(q_1)_{\ge 0}$}
	\put(80,25){\scriptsize $(q_2)_{\ge 0}$}
	\put(170,25){\scriptsize $P$}
	\end{picture} \\
	\parbox[t]{0.95\textwidth}{\mycaption{Illustration to Theorem~\ref{main:n} for the case $d=2$, $n=2$\label{Fig:semi-alg:2d}}} 
	\end{FigTab}

The study of $\stic(d,P)$ for the case when $P$ is a polyhedron of an arbitrary dimension was initiated by Gr\"otschel and Henk \GroetschelHenk. In \GroetschelHenk[Corollary~2.2(i)] it was noticed that $\stic(d,P) \ge d$ for every $d$-dimensional polytope $P.$ On the other hand, Bosse, Gr\"otschel, and Henk \BosseGroetschelHenk gave an upper bound for $\stic(d,P)$ which is linear in $d$ for the case of an arbitrary $d$-dimensional polyhedron $P.$ In particular, they showed that $\stic(d,P) \le 2d-1$ if $P$ is  $d$-dimensional polytope.  In \BosseGroetschelHenk the following conjecture was announced.

\begin{conjecture} \ThmTitle{Bosse \& Gr\"otschel \& Henk 2005} \label{bgh:conj}
	  For every $d$-dimensional polytope $P$ in $\real^d$  the equality $\stic(d,P)=d$ holds. \eop
\end{conjecture}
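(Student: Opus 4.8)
The inequality $\stic(d,P)\ge d$ is already available from \GroetschelHenk[Corollary~2.2(i)], so the whole point is to establish $\stic(d,P)\le d$, and the plan is to derive this from Theorem~\ref{main:n}. First I would record the case that is already within reach: if $P$ is a \emph{simple} $d$-polytope and $p_1,\ldots,p_s$ is its H-representation (so $s$ is the number of facets and each $p_i$ is affine), then $n(p_1,\ldots,p_s)=d$, the set $\cX(p_1,\ldots,p_s)$ is precisely the finite vertex set of $P$, and $s\ge d+1>d$; hence Theorem~\ref{main:n} gives $\stic(d,P)\le d$, and together with the lower bound, equality. Thus the conjecture reduces to \emph{non-simple} $P$, where the H-representation has $n>d$ and Theorem~\ref{main:n} yields only $\stic(d,P)\le n$.

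For a non-simple $P$ the plan is to construct a different — and possibly much longer — polynomial representation $q_1,\ldots,q_m\in\real[x]$ with the three properties $(q_1,\ldots,q_m)_{\ge0}=P$, $n(q_1,\ldots,q_m)=d$, and $\cX(q_1,\ldots,q_m)$ finite; then, for $m>d$, Theorem~\ref{main:n} again forces $\stic(d,P)\le d$. To produce such a family I would work vertex by vertex: at a non-simple vertex $v$ lying in facets with supporting affine forms $\ell_{i_1},\ldots,\ell_{i_k}$ (each $\ge0$ on $P$, each vanishing at $v$, $k>d$), these forms span $\real^d$, so a triangulation of the vertex cone at $v$ into simplicial cones using only its extreme rays singles out, for each full-dimensional piece, a $d$-subset of the $\ell_{i_j}$; replacing the facet inequalities through $v$ by the products of the corresponding linear forms makes only $d$ of the new polynomials vanish at $v$. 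The remaining work is to stitch these local prescriptions together into one global family, multiplying in the linear forms of the facets not resolved at a given vertex and adding small corrections — in the spirit of the approximation arguments behind Theorems~\ref{main:n+1} and \ref{main:n} — so as to restore $(q_1,\ldots,q_m)_{\ge0}=P$ exactly on every facet.

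The step I expect to be the real obstacle, and presumably the reason this is still only a conjecture, is exactly this global patching. Forming products to lower the active count at one vertex inevitably raises it along lower-dimensional faces and at points where several products vanish together, so arranging $n(q_1,\ldots,q_m)=d$ and $\cX(q_1,\ldots,q_m)$ finite \emph{simultaneously} everywhere is a delicate combinatorial problem over the entire face lattice of $P$, and the corrections needed to repair exactness on the facets tend to fight that arrangement. I would not expect the naive shortcut to help: perturbing $P$ to a simple polytope $P_\eps$, representing $P_\eps$ by $d$ polynomials via the simple case, and letting $\eps\to0$ fails because the representing polynomials can degenerate in the limit (an inequality becoming vacuous, or the facet structure collapsing), so the limit family need not represent $P$; and writing $P$ as an affine image or a section of a simple polytope does not interact well with the functional $n(\dotvar)$ or with exactness either. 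A genuinely new idea seems to be needed precisely at the point where the local resolutions must be reconciled globally.
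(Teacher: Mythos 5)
You have not proved the statement, and neither does the paper: Conjecture~\ref{bgh:conj} is recorded there purely as an open conjecture of Bosse, Gr\"otschel and Henk, with the remark that only the special case of simple polytopes has been settled (Theorem~\ref{AveHenkThm}), precisely as a consequence of Theorem~\ref{main:n}. Your treatment of that special case is correct and coincides with the paper's observation: for the H-representation of a simple $d$-polytope one has $n(p_1,\ldots,p_s)=d$, the set $\cX(p_1,\ldots,p_s)$ is the finite vertex set, and $s\ge d+1>n$, so Theorem~\ref{main:n} gives $\stic(d,P)\le d$, while \GroetschelHenk[Corollary~2.2(i)] gives $\stic(d,P)\ge d$. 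Up to this point you are simply reproving Theorem~\ref{AveHenkThm}, which is not the conjecture.

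For non-simple $P$ your text is a research programme, not an argument, and you say so yourself; the concrete gap is that the family $q_1,\ldots,q_m$ with $(q_1,\ldots,q_m)_{\ge 0}=P$, $n(q_1,\ldots,q_m)=d$ and $\cX(q_1,\ldots,q_m)$ finite is never produced, and Theorem~\ref{main:n} cannot be invoked until it exists. Moreover, the local device you propose fails already at the first step: if at a non-simple vertex $v$ you replace the facet forms $\ell_{i_1},\ldots,\ell_{i_k}$ through $v$ by products $\prod_{j\in J}\ell_j$ over $d$-subsets $J$ coming from a triangulation of the vertex cone, each such product is nonnegative wherever an \emph{even} number of its factors are negative, so near $v$ the set $(q_1,\ldots,q_m)_{\ge 0}$ picks up points outside $P$ (for instance, for even $d$ the entire cone where all $\ell_{i_j}\le 0$, i.e.\ the cone opposite to $P$ at $v$, satisfies every product inequality, and the facets not through $v$ are strictly positive there in a neighbourhood of $v$). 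The ``small corrections'' that would repair exactness while simultaneously keeping the active count equal to $d$ along every face and keeping $\cX$ finite are exactly the missing mathematical content, so the conjecture remains open both in your proposal and in the paper.
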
 
This conjecture has recently been confirmed for all simple $d$-dimensional polytopes; see \AverkovHenk. 

\begin{theorem} \ThmTitle{Averkov \& Henk 2007+} \label{AveHenkThm}
	Let $P$ be a $d$-dimensional simple polytope  Then $\stic(d,P) =d.$ Furthermore, there exists an algorithm that gets polynomials  $p_1,\ldots,p_s$ ($s \in \natur$) of degree one satisfying $P=(p_1,\ldots,p_s)_{\ge 0}$ and returns $d$ polynomials $q_1,\ldots,q_d$ satisfying
	$P = (q_1,\ldots,q_d)_{\ge 0}.$ 
	\eop
\end{theorem}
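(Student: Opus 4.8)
The plan is to obtain Theorem~\ref{AveHenkThm} as a special case of Theorem~\ref{main:n}. The lower bound $\stic(d,P)\ge d$ requires nothing new: it holds for every $d$-dimensional polytope by \GroetschelHenk[Corollary~2.2(i)]. So the content is the upper bound $\stic(d,P)\le d$ together with the algorithm, and for this I would first bring the input into a canonical form and then invoke Theorem~\ref{main:n}.

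First I would preprocess $p_1,\ldots,p_s$, keeping only those $p_i$ for which $\{p_i=0\}$ is a facet-defining hyperplane of $P$; whether a given $p_i$ has this property is decidable by linear programming, since $P$ is full-dimensional. Renaming the surviving polynomials $\ell_1,\ldots,\ell_m$, we obtain the (essentially unique) irredundant $H$-representation $P=(\ell_1,\ldots,\ell_m)_{\ge 0}$, with each $\ell_j\ge 0$ on $P$, and $m\ge d+1$ because a $d$-dimensional polytope has at least $d+1$ facets. From $\ell_1,\ldots,\ell_m$ I would also compute the vertex set $\vx(P)$: for every $d$-element subset $\{\ell_{j_1},\ldots,\ell_{j_d}\}$ solve the linear system $\ell_{j_1}=\cdots=\ell_{j_d}=0$ and keep its solution when it lies in $P$.

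Now the hypotheses of Theorem~\ref{main:n} are met for $\ell_1,\ldots,\ell_m$ with $n=d$. Indeed, since $P$ is simple, a point in the relative interior of a $k$-face of $P$ lies in exactly $d-k$ facets; hence the maximal number of the $\ell_j$ vanishing at a common point of $P$ equals $d$ and is attained precisely at the vertices, so $n(\ell_1,\ldots,\ell_m)=d$ and $\cX(\ell_1,\ldots,\ell_m)=\vx(P)$ is finite, while $n=d<m$ and $P$ is non-empty and bounded. Feeding $\ell_1,\ldots,\ell_m$ and $\cX=\vx(P)$ to the algorithm of Theorem~\ref{main:n} returns $d$ polynomials $q_1,\ldots,q_d$ with $P=(q_1,\ldots,q_d)_{\ge 0}$; together with the Gr\"otschel--Henk bound this gives $\stic(d,P)=d$, and chaining the preprocessing with that algorithm yields the algorithm claimed in the theorem.

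Thus all the genuine difficulty has been pushed into Theorem~\ref{main:n} -- primarily the construction, by a {\L}ojasiewicz-type estimate, of a polynomial whose nonnegativity set approximates $P$ arbitrarily well and touches $P$ exactly along $\cX$, combined with the product $\ell_1\cdots\ell_m$. Everything specific to the present statement -- recognising the facet representation, deducing $n=d$ from simplicity, enumerating vertices, and quoting the lower bound -- is routine, the one pitfall being that non-facet inequalities genuinely have to be discarded: a hyperplane passing through a vertex without bounding a facet would push $n$ above $d$ and break the reduction. One can also argue directly, as in \AverkovHenk, by lifting Bernig's two-dimensional construction ($q_2=p_1\cdots p_s$ together with a strictly concave $q_1$ vanishing at all vertices) to dimension $d$; but that essentially reproves Theorem~\ref{main:n} in the polytopal case, so I would prefer the reduction above.
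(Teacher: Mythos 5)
Your proposal is correct and follows essentially the same route as the paper, which states Theorem~\ref{AveHenkThm} as a result of \AverkovHenk and observes that it is a consequence of Theorem~\ref{main:n} (simple polytopes being exactly the case $n=d$ with $\cX$ the finite vertex set), with the lower bound $\stic(d,P)\ge d$ taken from \GroetschelHenk[Corollary~2.2(i)] just as you do. One small caveat: keeping only ``facet-defining'' inequalities does not by itself eliminate proportional duplicates of the same facet inequality, which would push $n(\ell_1,\ldots,\ell_m)$ above $d$; you must keep exactly one inequality per facet (as your phrase ``irredundant $H$-representation'' intends), after which the reduction goes through as written.
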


Elementary closed semi-algebraic sets $P:=(p_1,\ldots,p_s)_{\ge 0}$ with $n(p_1,\ldots,p_s)=d$ can be viewed as natural extensions of simple polytopes in the framework of real algebraic geometry. Thus, we can see that Theorem~\ref{AveHenkThm} is a consequence of Theorem~\ref{main:n}. Fig.~\ref{cube:p:rep} illustrates Theorem~\ref{AveHenkThm} for the case when $P$ is a three-dimensional cube.   This figure can also serve as an illustration of Theorem~\ref{main:n} with the only difference that in Theorem~\ref{main:n} the set $(p_1)_{\ge 0}$ does not have to be convex anymore.

\begin{FigTab}{c}{0.5mm}
\begin{picture}(132,160)
\put(35,55){\IncludeGraph{width=55\unitlength}{Cube.eps}}
\put(-15,30){\IncludeGraph{width=55\unitlength}{CubeP0.eps}}
\put(90,30){\IncludeGraph{width=55\unitlength}{CubeP1.eps}}
\put(35,120){\IncludeGraph{width=55\unitlength}{CubeP2.eps}}
\put(-15,90){\IncludeGraph{width=55\unitlength}{CubeP02.eps}}
\put(90,90){\IncludeGraph{width=55\unitlength}{CubeP12.eps}}
\put(35,0){\IncludeGraph{width=55\unitlength}{CubeP01.eps}}
\put(5,10){\IncludeGraph{width=110\unitlength}{PDiagram.eps}}
\put(45,55){\scriptsize $P$}
\put(-5,30){\scriptsize $(q_1)_{\ge0}$}
\put(100,30){\scriptsize $(q_2)_{\ge 0}$}
\put(45,120){\scriptsize $(q_3)_{\ge 0}$}
\put(-5,90){\scriptsize $(q_1,q_3)_{\ge 0}$}
\put(100,90){\scriptsize $(q_2,q_3)_{\ge 0}$}
\put(45,0){\scriptsize $(q_1,q_2)_{\ge 0}$}
\end{picture}
\\
\parbox[t]{0.98\textwidth}{\mycaption{Illustration to Theorem~\ref{AveHenkThm} (and Theorem~\ref{main:n}) for the case when $P$ is a three-dimensional cube.\label{cube:p:rep}}}
\end{FigTab}

While proving our main theorems we derive the following approximation results which can be of independent interest. \newcommand{\disth}{\mathop{\mathrm{\delta}}}
The \notion{Hausdorff distance} $\disth$ is a metric defined on the space of non-empty compact subsets of $\real^d$ by the equality
\begin{equation*}
	\disth(A,B) := \max \Bigl\{ \max_{a \in A} \min_{b \in B} \|a-b\|, \max_{b \in B} \min_{a \in A} \|a-b\| \Bigr\},
\end{equation*}
see \SchnBk[p.~48]. 

\begin{theorem} \label{approx:thm1} Let $p_1,\ldots,p_s \in \real[x],$ $P:=(p_1,\ldots,p_s)_{\ge 0}$, and $P_0:=(p_1,\ldots,p_s)_{>0}.$ Assume that $P$ is non-empty and bounded. Then there exists an algorithm that gets $p_1,\ldots,p_s$ and $\eps>0$ and returns a polynomial $q \in \real[x]$ such that  $P_0 \subseteq (q)_{>0}$, $P \subseteq (q)_{\ge 0}$, and the Hausdorff distance from $P$ to $(q)_{\ge 0}$ is at most $\eps.$ \eop
\end{theorem}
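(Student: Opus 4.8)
The plan is to produce $q$ in the explicit form
\[
  q(x) \;:=\; (s+1)M^N \;-\; \sum_{i=0}^{s}\bigl(M - p_i(x)\bigr)^N ,
\]
where $p_0(x):=R^2-\|x\|^2$ is an auxiliary constraint cutting out a ball containing $P$ with room to spare, and $M$, $N$ (even) and $R$ are suitably large parameters fixed below. The idea behind this choice is that on the ball $R\uball$ one has $x\in P$ iff $\max_{i=1,\ldots,s}(M-p_i(x))\le M$, and for large $N$ the quantity $\bigl(\sum_i (M-p_i(x))^N\bigr)^{1/N}$ approximates this maximum to within the factor $(s+1)^{1/N}\to1$; thus the threshold $\sum_{i}(M-p_i(x))^N\le(s+1)M^N$ recovers $P$ up to an error that shrinks as $N\to\infty$. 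The term $p_0$ is what keeps $(q)_{\ge0}$ bounded.

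Concretely, given $\eps>0$ I would set $\bar U:=P+\eps\uball$ (compact), choose $R$ with $\bar U\subseteq\interior(R\uball)$, put $p_0(x):=R^2-\|x\|^2$ and $R':=R+1$. Then choose $M$ larger than $\tfrac12\max_{i=0,\ldots,s}\sup_{R\uball}p_i$, so that for $x\in\bar U$ one has $|M-p_i(x)|\le M$ when $p_i(x)\ge0$ and $|M-p_i(x)|<M$ when $p_i(x)>0$. Finally choose $N$ even and so large that (i) $\bigl(1+(2R+1)/M\bigr)^N>s+1$ and (ii) $\bigl(1+\delta_1/M\bigr)^N>s+1$, where $\delta_1:=-\max\{\min_{i=1,\ldots,s}p_i(x)\;:\;x\in R'\uball\setminus(P+\eps\,\interior\uball)\}$; here $\delta_1>0$ by compactness because that set is disjoint from $P=(p_1,\ldots,p_s)_{\ge0}$ (if the set is empty this last parameter is not needed and the argument below simplifies).

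The verification then splits into four routine checks. For $x\in P$ we have $p_i(x)\ge0$ for $i\ge1$ and $p_0(x)>0$, hence every $|M-p_i(x)|\le M$, so $q(x)\ge0$; if moreover $x\in P_0$ all these inequalities are strict, giving $q(x)>0$. Thus $P\subseteq(q)_{\ge0}$ and $P_0\subseteq(q)_{>0}$. For $\|x\|\ge R'$ the single summand $(M-p_0(x))^N\ge(M+2R+1)^N>(s+1)M^N$ forces $q(x)<0$, so $(q)_{\ge0}\subseteq R'\uball$ is bounded. Finally, if $x\in R'\uball$ with $x\notin P+\eps\,\interior\uball$, then some $p_j(x)\le-\delta_1$, whence $(M-p_j(x))^N\ge(M+\delta_1)^N>(s+1)M^N$ and again $q(x)<0$; combining the last two cases gives $(q)_{\ge0}\subseteq P+\eps\uball=\bar U$. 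Since $(q)_{\ge0}$ is nonempty and compact and $P\subseteq(q)_{\ge0}\subseteq P+\eps\uball$, we conclude $\disth(P,(q)_{\ge0})\le\eps$.

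The main obstacle is the algorithmic content, i.e.\ making every parameter choice effective: one needs a computable radius $R$ with $P+\eps\uball$ in its interior (an effective outer bound for a bounded semi-algebraic set), a computable upper estimate for $\max_i\sup_{R\uball}p_i$ (crude coefficient bounds suffice), and — the delicate point — a computable positive lower bound for $\delta_1$, which is exactly where an effective form of {\L}ojasiewicz's inequality enters, quantifying how negative $\min_i p_i$ must be at Euclidean distance $\ge\eps$ from $P$. Alternatively, since the argument shows that \emph{some} even $N$ works for the $R$ and $M$ constructed above, the algorithm may simply try $N=2,4,6,\ldots$ and, for each candidate $q$, test $P\subseteq(q)_{\ge0}$, $P_0\subseteq(q)_{>0}$ and $(q)_{\ge0}\subseteq P+\eps\uball$ by a decision procedure for the first-order theory of the reals; this loop terminates. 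I would fix $R$ and $M$ by explicit (generous) formulas and handle $N$ by whichever of these two routes gives the cleaner exposition.
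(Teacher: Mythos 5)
Your construction is correct, and it reaches the statement by a genuinely different route than the paper. The paper proves this theorem by combining Theorem~\ref{semi:approx} with Lemma~\ref{sublevel:approx:part}: it relaxes the defining inequalities to $(1+\|x\|^2)^M p_i(x)\ge-\eps$, where the weight $(1+\|x\|^2)^M$ (with $M$ supplied by {\L}ojasiewicz's inequality) is exactly what keeps the relaxed set bounded — Remark~\ref{bounded:extension:remark} shows this is unavoidable if one relaxes the $p_i$ themselves — and then takes $q:=1-g_{M,\lambda,k}$, an average of even powers of $1-\frac{1}{\lambda}(1+\|x\|^2)^M p_i$. You instead adjoin the redundant constraint $p_0:=R^2-\|x\|^2$ (harmless, since $P\subseteq\interior (R\uball)$ and $p_0$ is inactive on $P$) and use the same ``sum of even powers approximates the maximum'' mechanism without any weight; boundedness of $(q)_{\ge 0}$ then comes for free from the $p_0$-summand, and the separation constant $\delta_1$ comes from compactness rather than from {\L}ojasiewicz. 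Your verification of $P\subseteq(q)_{\ge0}$, $P_0\subseteq(q)_{>0}$ and $(q)_{\ge0}\subseteq P+\eps\uball$ is sound. Concerning the point you flag as delicate: you do not need an effective {\L}ojasiewicz bound for $\delta_1$, nor an explicit formula for $R$; in the paper's computational model all of $R$, $M$ and the even exponent $N$ can be found by the same brute-force loops the paper itself uses — iterate the parameter and test a first-order sentence via Theorem~\ref{dec:problem} (for $R$: ``$P+\eps\uball\subseteq\interior(R\uball)$''; for $M$: ``$2M>\max_i\max_{R\uball}p_i$''; for $N$: the three inclusions above) — and termination of each loop is guaranteed by your existence argument, so your second, search-based route is the one to keep. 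The trade-off between the two proofs: yours is more elementary and self-contained for this particular theorem, since it bypasses {\L}ojasiewicz entirely; the paper's weighted relaxation $P(M,\eps)$ is designed to be reused in Theorem~\ref{07.12.10,11:53} and in the proofs of Theorems~\ref{main:n+1} and \ref{main:n}, where the construction must work with the given list $p_1,\ldots,p_s$ and its elementary symmetric functions, so introducing an auxiliary defining polynomial there would change the data the main theorems are stated in terms of.
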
 

\begin{theorem} \label{approx:thm2} Let $p_1,\ldots,p_s \in \real[x],$ $P:=(p_1,\ldots,p_s)_{\ge 0}$, and $P_0:=(p_1,\ldots,p_s)_{>0}.$ Assume that $P$ is non-empty and bounded, $X:=X(p_1,\ldots,p_s)$ is finite, and $n:=n(p_1,\ldots,p_s)<s.$ Then there exists an algorithm that gets $p_1,\ldots,p_s$, $X$, and $\eps>0$ and returns a polynomial $q \in \real[x]$ such that  $P_0 \subseteq (q)_{>0}$, $P \subseteq (q)_{\ge 0}$, the Hausdorff distance from $P$ to $(q)_{\ge 0}$ is at most $\eps$, and $q(x)=0$ for every $x \in X.$ \eop
\end{theorem}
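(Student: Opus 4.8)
The plan is to prove Theorem~\ref{approx:thm2} by refining the construction behind Theorem~\ref{approx:thm1} so that the resulting polynomial $q$ additionally vanishes on the finite set $X$. First I would recall the kind of object produced by Theorem~\ref{approx:thm1}: a polynomial $q$ with $P \subseteq (q)_{\ge 0}$, $P_0 \subseteq (q)_{>0}$, and $\disth(P,(q)_{\ge 0}) \le \eps$. The natural starting point is to take such a $q$ (call it $\tilde q$, obtained from the algorithm of Theorem~\ref{approx:thm1} applied with a tolerance $\eps' \le \eps$ to be fixed later) and then to \emph{subtract a small correction term} that kills the values $\tilde q(x)$ for $x \in X$ without destroying the sign conditions or the Hausdorff bound. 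Since $X = \{a_1,\ldots,a_m\}$ is finite, a convenient correction is a nonnegative polynomial $h$ with $h(a_j) > 0$ for each $j$ — for instance $h(x) := \prod_{j=1}^m \bigl(1 + \|x-a_j\|^2 - \text{(small)}\bigr)$ or, more simply, any fixed nonnegative polynomial that is strictly positive on $P$, e.g. $h \equiv 1$ — and to define $q := \tilde q - \lambda h$ for a suitable Lagrange-type adjustment; but because we need $q$ to vanish \emph{exactly} at the finitely many points of $X$, the cleaner route is to use a single interpolation polynomial: choose $g \in \real[x]$ with $g(a_j) = \tilde q(a_j)$ for all $j$ and with $g$ small on a compact neighbourhood of $P$, then set $q := \tilde q - g$.

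The key steps, in order, are as follows. (1) Run the algorithm of Theorem~\ref{approx:thm1} with a parameter $\eps'>0$ (to be chosen), obtaining $\tilde q$ with $P \subseteq (\tilde q)_{\ge 0}$, $P_0 \subseteq (\tilde q)_{>0}$, and $\disth(P,(\tilde q)_{\ge 0}) \le \eps'$. (2) Observe that the points of $X$ lie in $P$, hence $\tilde q(a_j) \ge 0$; moreover since each $a_j$ lies in the boundary-type locus where $n$ of the $p_i$ vanish, one expects $\tilde q(a_j)$ to be small but not necessarily zero. (3) Construct, by Lagrange interpolation over the finite node set $X$, a polynomial $g$ with $g(a_j) = \tilde q(a_j)$ for every $j$; by scaling the interpolation basis (multiplying the $j$-th Lagrange basis polynomial by a high power of a fixed polynomial vanishing at $a_j$ but normalized at the other nodes, or simply by controlling degrees and using that $\sup_{P'} |g|$ depends continuously on the interpolated data on a fixed compact set $P' \supseteq P$), arrange that $\sup_{x \in P'} |g(x)| < \delta$ for a prescribed $\delta>0$, where $P'$ is a fixed compact $\eps$-neighbourhood of $P$. (4) Set $q := \tilde q - g$. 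Then $q(a_j) = 0$ for all $j$ by construction. (5) Verify the sign conditions: on $P_0$ we have $\tilde q > 0$, and on the compact set $P$ we have $\tilde q \ge 0$; choosing $\delta$ small enough relative to $\min_{P}$-type quantities is not directly available since $\tilde q$ may vanish on $P$, so instead I would argue that $g$ vanishes on all of $X$ after the correction — wait, more carefully: the subtraction must not create points of $P$ where $q<0$. This is the delicate point and is handled by taking $\eps'$ much smaller than $\eps$ and then absorbing $g$ into the same approximation estimate, i.e. showing $P \subseteq (q)_{\ge 0}$ and $\disth(P,(q)_{\ge 0}) \le \eps$ by a perturbation argument: $(q)_{\ge 0}$ differs from $(\tilde q)_{\ge 0}$ by at most an $O(\delta)$ amount in Hausdorff distance on the relevant region.

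The main obstacle I anticipate is exactly step (5): ensuring that subtracting the interpolation correction $g$ does not violate $P \subseteq (q)_{\ge 0}$ and does not blow up the Hausdorff distance. Because $\tilde q$ genuinely attains the value $0$ at points of $X \subseteq P$, one cannot simply say ``$g$ is smaller than $\min_P \tilde q$.'' The way around this is to not interpolate $\tilde q$ itself but to build $q$ from the ground up so that $q$ is forced to vanish on $X$ while still dominating a suitable model function on $P$; concretely, I expect the actual proof to revisit the \emph{internal} construction of Theorem~\ref{approx:thm1} — which presumably produces $q$ as (a transform of) a product like $p_1 \cdots p_s$ plus {\L}ojasiewicz-type corrective terms — and to insert a factor vanishing on $X$ at the appropriate stage, using that each point of $X$ is a common zero of $n<s$ of the $p_i$ so that the product $p_1\cdots p_s$ (or the chosen surrogate) already vanishes there, and then to control the approximation error via the {\L}ojasiewicz inequality. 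In short: the hard part is the simultaneous control of (a) exact vanishing on $X$, (b) the one-sided inclusion $P\subseteq(q)_{\ge 0}$, and (c) the $\eps$-closeness — and the natural resolution is to thread the $X$-vanishing through the same {\L}ojasiewicz-based estimate that already delivers Theorem~\ref{approx:thm1}, rather than to post-process its output.
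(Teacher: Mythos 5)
Your concrete plan (steps (1)--(5)) has a genuine gap, and it is exactly the one you flag in step (5): post-processing the output $\tilde q$ of Theorem~\ref{approx:thm1} by subtracting an interpolant $g$ with $g(a_j)=\tilde q(a_j)$ cannot work. Since each $a_j\in X$ is in general not isolated in $P$ (think of the vertices of a square), the corrected polynomial $q=\tilde q-g$ must attain the value $0$ at $a_j$ while staying nonnegative on $P$, i.e.\ $a_j$ must be a minimizer of $q$ on $P$; a generic interpolant gives no control of first-order behaviour along $P$ at $a_j$, so $q$ will typically change sign there. Moreover your two requirements on $g$ are mutually inconsistent: $g(a_j)=\tilde q(a_j)$ is a fixed positive number (for the construction behind Theorem~\ref{approx:thm1} one even has $\tilde q=1-g_{M,\lambda,k}$ bounded away from $0$ on $P$ when $n<s$), so one cannot also have $\sup_{P'}|g|<\delta$ for small $\delta$. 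Your closing paragraph correctly senses that the vanishing on $X$ must be built into the construction itself and controlled by {\L}ojasiewicz, but it does not supply the mechanism, and the surrogate you suggest, $p_1\cdots p_s$, is the wrong one: it vanishes at \emph{every} point of $P$ where some single $p_i$ vanishes, so the subtracted correction could not be dominated on the whole of $P$ away from $X$.

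The paper's proof (Theorem~\ref{07.12.10,11:53}, of which Theorem~\ref{approx:thm2} is a direct corollary via Theorem~\ref{semi:approx}) takes
\begin{equation*}
 q(x)=\sigma_{s-n+1}\bigl(p_1(x),\ldots,p_s(x)\bigr)-g_{M,\lambda,k}(x)^l\,h_\mu(x)^m,
 \qquad h_\mu(x)=\prod_{v\in X}\Bigl(\frac{\|x-v\|}{\mu}\Bigr)^2 .
\end{equation*}
The elementary symmetric polynomial $\sigma_{s-n+1}$ is the key object your sketch is missing: by a pigeonhole argument it vanishes on $P$ \emph{exactly} at the points of $X$ (every index set of size $s-n+1$ meets the $n$ active indices) and is strictly positive on $P\setminus X$, hence bounded below by some $\gamma>0$ outside small balls around $X$. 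Both the base term and the correction vanish on $X$, which gives $q|_X=0$ for free. Nonnegativity on $P$ is then proved by a three-case analysis: near each $v\in X$ one applies Theorem~\ref{Loj} to $\sigma_{s-n+1}$ and $(\|x-v\|/\mu)^2$ on $B^d(v,\rho)\cap P$ to choose $m$ so that $h_\mu^m\le\tau\,\sigma_{s-n+1}$, and takes $l$ large so that $\tau\alpha^l<1$ (with $\max_P g_{M,\lambda,k}\le\alpha<1$, which uses $n<s$); on $P$ away from the balls one uses $\alpha^l<\gamma$; and outside $P(M,2\eps)$ one uses $g_{M,\lambda,k}>1$ together with a binomial-type bound $|\sigma_{s-n+1}|\le\binom{s}{n-1}\lambda^{s-n+1}(s+1)^{s-n+1}g^{s-n+1}$ to force $q<0$, which yields $(q)_{\ge0}\subseteq P(M,2\eps)$ and hence, via Theorem~\ref{semi:approx}, the Hausdorff bound. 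None of these ingredients (the choice of $\sigma_{s-n+1}$, the high-order factor $h_\mu^m$, the exponent bookkeeping in $l$, $m$, and the outside-negativity estimate) is recoverable from your interpolation scheme, so the proposal as written does not prove the theorem.
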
 

We note that some further results on approximation by sublevel sets of polynomials can be found  in \cite{MR0154184}, \cite{MR0353146}, and \GroetschelHenk[Lemma~2.6].

 The paper has the following structure. Section~\ref{prelim:sect} contains preliminaries from real algebraic geometry.  In Section~\ref{approx:sect} we obtain  approximation results (including Theorems~\ref{approx:thm1} and \ref{approx:thm2}). Finally, in Section~\ref{main:proofs:sect} the proofs of Theorems~\ref{main:n+1} and \ref{main:n}  are presented. In the beginning of the proofs of Theorems~\ref{main:n+1} and \ref{main:n} one can find the formulas defining the polynomials $q_i$ (see \eqref{07.12.04,10:58} and \eqref{08.04.02,14:45}) as well as sketches of the main arguments. 

%

\section{Preliminaries from real algebraic geometry} \label{prelim:sect}

The origin and the Euclidean norm in $\real^d$ are denoted by $o$ and $\|\dotvar\|,$ respectively. We endow $\real^d$ with its Euclidean topology. By $B^d(c,\rho)$ we denote the closed Euclidean ball in $\real^d$ with center at $c \in \real^d$ and radius $\rho >0.$ The interior (of a set) is abbreviated by $\intr.$ We also define $\natur_0:=\natur \cup \{0\},$ where $\natur$ is the set of all natural numbers.  

A set $A \subseteq \real^d$ given by 
$$
	A:=\bigcup_{i=1}^k \setcond{x \in \real^d}{f_{i,1}(x) > 0, \ldots, f_{i,s_i}(x)>0, \  g_i(x)=0},
$$
where $i \in \{1,\ldots,k\}, \ j \in \{1,\ldots,s_i\}$ and $f_{i,j}, g_i \in \real[x]$,  is called \notion[semi-algebraic set]{semi-algebraic}.

\newcommand{\Free}{\mathop{\mathrm{Free}}\nolimits}

An expression $\Phi$ is called a \notion{first-order formula  over the language of ordered fields with coefficients in $\real$} if $\Phi$ is a formula built with a finite number of conjunctions, disjunctions, negations, and universal or existential quantifier on variables, starting from formulas of the form $f(x_1,\ldots,x_d)=0$ or $g(x_1,\ldots,x_d)>0$ with $f, g \in \real[x]$; see \RAGbook[Definition~2.2.3].  The \notion{free variables} of $\Phi$ are those variables, which are not quantified. A formula with no free variables is called a \emph{sentence}. Each sentence is is either true or false.  The following proposition is well-known; see also \RAGbook[Proposition~2.2.4] and \BasuPollackRoy[Corollary~2.75]. 
\begin{proposition} \label{semialg:over:formula}
	Let $\Phi$ be a first-order formula over the language of ordered fields with coefficients in $\real$ and free variables $ y_1,\ldots,y_m.$ Then the set 
	\begin{equation*}
		\setcond{ (y_1,\ldots,y_m) \in \real^m}{ \Phi(y_1,\ldots,y_m) },
	\end{equation*}
	consisting of all $(y_1,\ldots,y_m) \in \real^d$ for which $\Phi$ is true, is semi-algebraic. \eop
\end{proposition}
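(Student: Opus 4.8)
The plan is to proceed by the Tarski--Seidenberg quantifier-elimination theorem, which I would invoke as the single deep ingredient. Recall that Tarski--Seidenberg asserts: for every first-order formula $\Phi$ over the language of ordered fields with coefficients in $\real$ there is a \emph{quantifier-free} formula $\Psi$, with the same free variables $y_1,\ldots,y_m$, that is equivalent to $\Phi$ over $\real$. So the first step is to replace $\Phi$ by such a $\Psi$. It then suffices to show that the solution set in $\real^m$ of an arbitrary quantifier-free formula is semi-algebraic in the sense defined above, i.e.\ a finite union of sets of the form $\setcond{y}{f_1(y)>0,\ldots,f_s(y)>0,\ g(y)=0}$.

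Next I would argue by structural induction on the quantifier-free formula $\Psi$. The atomic formulas are of the shape $f(y)=0$ or $g(y)>0$ with $f,g\in\real[y_1,\ldots,y_m]$; the first defines $\setcond{y}{f(y)=0}$ (take $g_i$-part equal to $f$, no strict inequalities), and the second defines $\setcond{y}{g(y)>0}$ (take $g_i\equiv 0$), so both are semi-algebraic. For the inductive step one checks that the class of semi-algebraic subsets of $\real^m$ is closed under finite unions (immediate from the definition, which is itself a finite union), under finite intersections (distribute: an intersection of two finite unions of ``basic'' pieces is again such a finite union, and the intersection of two basic pieces $\{f_i>0, g=0\}\cap\{f'_j>0,g'=0\}$ is $\{f_i>0, f'_j>0,\ g^2+g'^2=0\}$, using that over $\real$ the conjunction $g=0\wedge g'=0$ is equivalent to $g^2+g'^2=0$), and under complementation. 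Closure under complementation is the one point that needs a small remark: the complement of a basic set $\{f_1>0,\ldots,f_s>0,\ g=0\}$ is $\{g\neq 0\}\cup\bigcup_{i}\{f_i\le 0\}$, and each of the pieces $\{g\neq 0\}=\{g>0\}\cup\{-g>0\}$, $\{f_i<0\}=\{-f_i>0\}$, $\{f_i=0\}$ is semi-algebraic; combining with closure under finite unions and intersections gives the complement. Since $\{\neg,\vee,\wedge\}$ generate all Boolean combinations, the induction goes through and the solution set of $\Psi$, hence of $\Phi$, is semi-algebraic.

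The main obstacle is conceptual rather than computational: everything hinges on the Tarski--Seidenberg elimination theorem, whose proof is nontrivial and which I would simply cite (as in \RAGbook[\S2.2] or \BasuPollackRoy[Chapter~2]); the remainder of the argument is the routine verification that the collection of semi-algebraic sets forms a Boolean algebra stable under the relevant operations. The only genuinely delicate bookkeeping is matching the specific normal form in the definition above (finite union of conjunctions of \emph{strict} inequalities together with \emph{one} equation $g_i=0$): one must repeatedly use the tricks $g=0\wedge g'=0\iff g^2+{g'}^2=0$ to collapse several equations into one, and $f=0\iff f\ge 0\wedge -f\ge 0$ is avoided since the target form uses strict inequalities, so instead $f\le 0$ is rewritten as $\{-f>0\}\cup\{f=0\}$. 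Keeping these rewrites consistent is the place where care is needed, but it is entirely elementary.
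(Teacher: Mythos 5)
Your argument is correct: reduce by the Tarski--Seidenberg quantifier-elimination theorem to a quantifier-free formula and then verify by structural induction that the solution sets of quantifier-free formulas are exactly the finite Boolean combinations captured by the paper's normal form (finite unions of sets cut out by strict inequalities and one equation, using $g=0\wedge g'=0\Leftrightarrow g^2+g'^2=0$ and $f\le 0\Leftrightarrow(-f>0)\vee(f=0)$). The paper itself gives no proof of this proposition -- it is stated as well known with references to \RAGbook[Proposition~2.2.4] and \BasuPollackRoy[Corollary~2.75] -- and your proof is precisely the standard argument found in those sources, so there is nothing to object to.
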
 

A real valued function $f(x)$ defined on a semi-algebraic set $A$ is said to be a \notion{semi-algebraic function} if its graph is a semi-algebraic set in $\real^{d+1}.$  The following theorem presents \notion{{\L}ojasiewicz's Inequality}; see \cite{MR0107168} and \RAGbook[Corollary~2.6.7].

\begin{theorem}  \ThmTitle{{\L}ojasiewicz 1959} \label{Loj}
	Let $A$ be non-empty, bounded, and closed semi-algebraic set in $\real^d.$ Let $f$ and $g$ be continuous, semi-algebraic functions defined on $A$ and such that   $\setcond{x \in A}{f(x) =0} \subseteq \setcond{x \in A}{g(x) = 0}.$
	Then there exist $M \in \natur$ and $\lambda \ge 0$ such that 
	\begin{equation*}
		|g(x)|^M \le \lambda \, |f(x)| 
	\end{equation*}
	for every $x \in A.$  \eop
\end{theorem}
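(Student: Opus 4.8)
The plan is to deduce {\L}ojasiewicz's Inequality from a statement about a single semi-algebraic function of one real variable. First, if $Z := \setcond{x \in A}{f(x) = 0}$ is empty, then $|f|$ is continuous and strictly positive on the non-empty compact set $A$, so $|f(x)| \ge m$ for some $m > 0$ and all $x \in A$; since $|g|$ is bounded on $A$, say $|g(x)| \le G$ with $G \ge 0$, the desired inequality already holds with $M := 1$ and $\lambda := G/m$. So assume from now on that $Z \ne \emptyset$; by hypothesis $Z \subseteq \setcond{x \in A}{g(x) = 0}$. I would then introduce the \emph{profile function}
\begin{equation*}
	\phi(t) := \max \setcond{ |g(x)| }{ x \in A, \ |f(x)| \le t }, \qquad t \ge 0 .
\end{equation*}
Since $A$ is compact, $f$ and $g$ are continuous, and $Z \ne \emptyset$, for every $t \ge 0$ the feasible set $\setcond{x \in A}{|f(x)| \le t}$ is non-empty and compact, so the maximum is attained; thus $\phi$ is well-defined, non-decreasing, and bounded above by $G := \max_{x \in A} |g(x)|$, and $\phi(0) = 0$ because $Z \subseteq \setcond{x \in A}{g(x) = 0}$.

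Next I would record the two properties of $\phi$ that make the reduction work. The graph of $\phi$ is defined by the first-order formula
\begin{equation*}
	\Bigl( \forall x \in A :\ |f(x)| \le t \ \Rightarrow\ |g(x)| \le y \Bigr) \ \wedge\ \Bigl( \exists x \in A :\ |f(x)| \le t \ \wedge\ |g(x)| = y \Bigr),
\end{equation*}
so by Proposition~\ref{semialg:over:formula} the function $\phi$ is semi-algebraic on $[0,\infty)$. Moreover $\lim_{t \to 0^+} \phi(t) = 0$: otherwise there would be $\eps_0 > 0$, a sequence $t_k \to 0^+$, and points $x_k \in A$ with $|f(x_k)| \le t_k$ and $|g(x_k)| \ge \eps_0$; passing to a convergent subsequence $x_k \to x^\ast \in A$ (using compactness of $A$) yields $f(x^\ast) = 0$ but $|g(x^\ast)| \ge \eps_0$, contradicting $Z \subseteq \setcond{x \in A}{g(x) = 0}$.

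The heart of the matter is now a purely one-variable assertion: a bounded semi-algebraic function $\phi$ on an interval $[0,\delta_0]$ with $\lim_{t \to 0^+}\phi(t) = \phi(0) = 0$ admits an estimate $\phi(t) \le c\, t^{1/N}$ on some interval $[0,\delta]$, for suitable $c > 0$, $\delta \in (0,\delta_0]$ and $N \in \natur$. This is where I expect the real work to lie, and it is exactly the point where one invokes the structure theory of semi-algebraic functions of one variable: by the monotonicity theorem and the Puiseux expansion at $0$, on a small enough interval $\phi$ either vanishes identically or satisfies $\phi(t) \le c\, t^{\alpha}$ for some $c > 0$ and positive rational $\alpha = p/q$, and since $t^{\alpha} \le t^{1/q}$ for $t \in [0,1]$ the estimate follows with $N := q$. (In the paper this classical input is legitimately cited from \RAGbook.) Granting it, for $x \in A$ with $|f(x)| \le \delta$ we get $|g(x)| \le \phi(|f(x)|) \le c\, |f(x)|^{1/N}$, hence $|g(x)|^{N} \le c^{N}|f(x)|$; and for $x \in A$ with $|f(x)| > \delta$ we get $|g(x)|^{N} \le G^{N} \le (G^{N}/\delta)\,|f(x)|$. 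Taking $M := N$ and $\lambda := \max\{ c^{N},\ G^{N}/\delta \}$ then gives $|g(x)|^{M} \le \lambda\,|f(x)|$ for all $x \in A$, which completes the proof.
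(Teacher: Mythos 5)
The paper does not actually prove Theorem~\ref{Loj}: it is quoted as a classical result, with the proof delegated to \cite{MR0107168} and to \RAGbook[Corollary~2.6.7], so there is no in-paper argument to compare yours against. Your proposal is correct, and it is in substance the standard proof from the cited source: reduce to one variable via the profile function $\phi(t) = \max\setcond{|g(x)|}{x \in A,\ |f(x)| \le t}$, whose semi-algebraicity follows from quantifier elimination (note that to apply Proposition~\ref{semialg:over:formula} literally you must rewrite ``$|f(x)|\le t$'' and ``$|g(x)|=y$'' by quantifying over the graphs of $f$ and $g$, since $f$ and $g$ are semi-algebraic functions rather than polynomials --- routine, but worth saying), and then invoke the one-variable structure theory to get $\phi(t)\le c\,t^{1/N}$ near $0$, from which the inequality follows by the two-case estimate you give. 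The one caveat is that this one-variable step (monotonicity theorem plus Puiseux expansion at $0$) is itself classical input taken on faith, so your argument is a reduction to that fact rather than a self-contained proof; that is consistent with, and no weaker than, the paper's own treatment, which cites the entire statement.
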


\newcommand{\cP}{\mathcal{P}}

Considering algorithmic questions we use the following standard settings; see \cite[Chapter~\S8.1]{MR1393194}.  It is assumed that a polynomial in $\real[x]$ is given by its coefficients and that a finite list of real coefficients occupies finite memory space. Furthermore, arithmetic and  comparison operations over reals are assumed to be atomic, i.e., computable in one step. The following well-known result is relevant for the constructive part of our  theorems; see \BasuPollackRoy[Algorithm~12.30]. 

\begin{theorem} \label{dec:problem} \ThmTitle{Tarski 1951, Seidenberg 1954}
Let $\Phi$ be a sentence over the language of ordered fields with coefficients in $\real$. Then there exists an algorithm that gets $\Phi$ and decides whether $\Phi$ is true or false. \eop
\end{theorem}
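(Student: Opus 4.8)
The plan is to reduce the theorem to an \emph{effective quantifier-elimination procedure} for the first-order theory of $\real$ (equivalently, of real closed fields); this is classical, going back to Tarski, and I would organise the argument as follows. It suffices to produce an algorithm that, given any first-order formula $\Psi(y_1,\ldots,y_m)$ over the language of ordered fields with coefficients in $\real$, returns a quantifier-free formula $\Psi'(y_1,\ldots,y_m)$ --- a Boolean combination of atoms $f(y)=0$ and $g(y)>0$ with $f,g\in\real[y_1,\ldots,y_m]$ --- equivalent to $\Psi$ over $\real$: applying this to a sentence $\Phi$ (the case $m=0$) yields a Boolean combination of atoms $c=0$ and $c>0$ in which each $c\in\real$ is obtained from the coefficients occurring in $\Phi$ by finitely many additions, subtractions and multiplications, and such a combination is evaluated in finitely many atomic arithmetic and comparison steps. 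The surrounding reduction is routine: negations are pushed to the atoms by De Morgan's laws (using $\neg(f=0)\equiv(f>0\vee -f>0)$ and $\neg(g>0)\equiv(g=0\vee -g>0)$), a universal quantifier is rewritten via $\forall x\,\phi\equiv\neg\exists x\,\neg\phi$, and, after putting the matrix into disjunctive normal form, $\exists x(\phi_1\vee\cdots\vee\phi_r)\equiv(\exists x\,\phi_1)\vee\cdots\vee(\exists x\,\phi_r)$. Thus everything comes down to eliminating a single $\exists x$ from a \emph{conjunction} $\bigwedge_{j=1}^{k}p_j(x)\,\sigma_j\,0$, where $p_1,\ldots,p_k$ are polynomials in the one variable $x$ whose coefficients lie in $\real[y_1,\ldots,y_m]$ and each $\sigma_j\in\{=,>\}$.

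The core is the projection step. I would show that there is a finite family $\calG=\{g_1(y),\ldots,g_N(y)\}\subseteq\real[y_1,\ldots,y_m]$, computable from $p_1,\ldots,p_k$ by polynomial-algebra operations --- leading coefficients with respect to $x$, partial derivatives $\partial/\partial x$, and iterated pseudo-remainders (equivalently, subresultant coefficients) of pairs $p_i,p_j$ --- such that the set of \emph{realizable sign conditions} of $(p_1,\ldots,p_k)$ along the $x$-line is constant on each subset of $y$-space on which every member of $\calG$ has constant sign. Concretely, one enlarges $\{p_1,\ldots,p_k\}$ to a family $\calF$ closed under $\partial/\partial x$ and under taking remainders $\mathrm{rem}(p,q)$ with $\deg_x p\ge\deg_x q$; over any region of $y$-space on which all leading coefficients of members of $\calF$ have constant sign, Thom's Lemma (the real roots of a real univariate polynomial are separated by the signs of its successive derivatives) together with the division relations among members of $\calF$ lets one read off, uniformly in $y$, the entire sign diagram of $\calF$ along the $x$-line: the signs at $x\to-\infty$, the ordered list of sign changes at the real roots, and the signs at $x\to+\infty$. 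From this diagram one decides whether an $x$ with $\bigwedge_j p_j(x)\,\sigma_j\,0$ exists, and the answer depends only on the sign vector of $\calG$ on that region. Collecting, for each satisfying sign vector of $\calG$, the conjunction of the corresponding sign conditions on $g_1,\ldots,g_N$, and taking the disjunction, yields the required quantifier-free $\Psi'(y)$.

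Finally I would verify effectiveness: forming leading coefficients, derivatives, resultants, subresultants and pseudo-remainders, and evaluating Boolean combinations of sign conditions, are each finite sequences of arithmetic and comparison operations on coefficients, hence atomic under the model of computation fixed above; the enumeration of sign vectors of $\calG$ is finite; and the recursion over the number of quantifiers has finite depth. So the procedure halts, and it is the desired algorithm. I expect the genuine obstacle to be exactly the projection step --- proving that a finite, explicitly computable family $\calG(y)$ governs the realizable sign conditions of a parametric univariate family of polynomials; this is where one needs the quantitative real algebraic geometry of subresultant sequences, Sturm--Tarski root counting and Thom's Lemma, whereas the rest (normal forms, distributing $\exists$ over $\vee$, the identity $\forall=\neg\exists\neg$, and the final evaluation of a variable-free Boolean combination) is bookkeeping. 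An alternative packaging of the same content is Collins's Cylindrical Algebraic Decomposition: build a finite partition of $\real^{m+1}$ into connected semi-algebraic cells on each of which all polynomials appearing in $\Phi$ have constant sign, cylindrical over an analogous partition of $\real^m$, and read off the truth of $\Phi$ from one sample point per cell.
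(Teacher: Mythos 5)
The paper does not prove this statement at all: it is quoted as the classical Tarski--Seidenberg decision theorem, with a pointer to Basu, Pollack and Roy (Algorithm~12.30), so there is no internal proof to compare yours against. Your plan is the standard effective quantifier-elimination route and is correct in outline; it is essentially the classical Tarski/Cohen--H\"ormander argument (or, in your alternative packaging, Collins's CAD), whereas the algorithm actually cited by the paper eliminates whole blocks of quantifiers by the critical-point method for complexity reasons --- a difference of efficiency, not of correctness. Two remarks on what your sketch still owes. First, the projection step as you state it quietly assumes that the leading coefficients of the members of $\calF$ have constant, \emph{known} sign on each region; in general one must split into cases according to which leading coefficients vanish and recurse on the truncated polynomials, and the closure of $\calF$ under derivatives and (pseudo-)remainders must be taken relative to each such case --- this is exactly where the finiteness of the computable family $\calG$ has to be argued, and your text only names the tools (subresultants, Sturm--Tarski, Thom's Lemma) rather than running the argument. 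Second, your final evaluation step is legitimate only because the paper's model of computation declares arithmetic and comparisons on real numbers to be atomic; you do use this, and it is worth saying explicitly that without it a ``sentence with coefficients in $\real$'' would not even be decidable in the bit model. With those caveats, the proposal identifies the right reduction, the right key lemma, and the genuine obstacle, and filling it in amounts to reproducing the textbook proof the paper delegates to the literature.
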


\section{Approximation results} \label{approx:sect}

The following proposition (see \SchnBk[p.~57]) presents a characterization of the convergence with respect to the Hausdorff distance. 
\begin{proposition} \label{Hausd:convergence} 
	A sequence $(A_n)_{n=1}^{+\infty}$ of compact convex sets in $\real^d$ converges to a compact set $A$ in the Hausdorff distance if and only if the following conditions are fulfilled:
	\begin{enumerate} 
		\item Every point of $A$ is a limit of a sequence $(a_k)_{k=1}^{+\infty}$ satisfying $a_k \in A_k$ for every $k \in \natur.$ 
		\item If $(k_j)_{j=1}^{+\infty}$ is a strictly increasing sequence of natural numbers and $(a_{k_j})_{j=1}^{+\infty}$ is a convergent sequence satisfying $a_{k_j} \in A_{k_j}$ ($j \in \natur$), then $a_{k_j}$ converges to a point of $A$, as $j \rightarrow +\infty.$
		\item The set $\bigcup_{k=1}^{+\infty} A_k$ is bounded. 
	\end{enumerate} 
	\eop
\end{proposition}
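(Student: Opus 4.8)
The plan is to reduce everything to the one-sided distance functions. For a nonempty compact $B\subseteq\real^d$ write $d(c,B):=\min_{b\in B}\|c-b\|$; this is $1$-Lipschitz in $c$, and since all the sets in sight are compact one has
$\disth(A_n,A)=\max\bigl\{\max_{c\in A_n}d(c,A),\ \max_{a\in A}d(a,A_n)\bigr\}$
with both maxima attained. All sets involved are nonempty (this is built into the definition of $\disth$, and, given $A\ne\emptyset$, condition~(1) forces every $A_k\ne\emptyset$). Convexity of the $A_n$ will play no role, and I would point that out in passing.

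For the implication ``$\disth(A_n,A)\to 0 \Rightarrow$ (1)--(3)'': condition~(3) holds because $\disth(A_n,A)<1$ for all sufficiently large $n$ gives $A_n\subseteq A+B^d(o,1)$, a bounded set, and the finitely many remaining $A_n$ are compact hence bounded. For~(1), given $a\in A$ choose $a_k\in A_k$ attaining $\|a-a_k\|=d(a,A_k)$; then $\|a-a_k\|\le\disth(A_k,A)\to 0$, so $a_k\to a$. For~(2), if $a_{k_j}\in A_{k_j}$ and $a_{k_j}\to a^*$, then $d(a_{k_j},A)\le\disth(A_{k_j},A)\to 0$; by the $1$-Lipschitz continuity of $d(\cdot,A)$ this forces $d(a^*,A)=0$, and $A$ is closed, so $a^*\in A$.

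For the converse, assume (1)--(3) and suppose, for contradiction, that $\disth(A_n,A)\ge\eps$ for some $\eps>0$ along a strictly increasing sequence of indices $n_1<n_2<\cdots$. Passing to a subsequence, at least one of the two one-sided maxima is $\ge\eps$ for every index of the subsequence. \emph{Case 1:} there are $c_{n_j}\in A_{n_j}$ with $d(c_{n_j},A)\ge\eps$. By~(3) the sequence $(c_{n_j})_j$ is bounded, so by Bolzano--Weierstrass a subsequence converges to some $c^*$; the indices stay strictly increasing, so~(2) applies and yields $c^*\in A$, contradicting $d(c^*,A)=\lim_j d(c_{n_j},A)\ge\eps>0$. \emph{Case 2:} there are $a_j\in A$ with $d(a_j,A_{n_j})\ge\eps$. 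By compactness of $A$ a subsequence satisfies $a_j\to a^*\in A$; by~(1) pick $b_k\in A_k$ with $b_k\to a^*$; then $d(a_j,A_{n_j})\le\|a_j-a^*\|+\|a^*-b_{n_j}\|\to 0$, a contradiction. In either case $\disth(A_n,A)\to 0$.

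The only delicate point — and it is bookkeeping rather than a genuine difficulty — is to organize the nested subsequence extractions so that whenever condition~(2) is invoked its hypotheses hold \emph{verbatim} (strictly increasing indices $k_j$ together with $a_{k_j}\in A_{k_j}$), and to record that the maxima defining $\disth$ are attained, so that the witnessing points $c_{n_j}$ and $a_j$ exist. Everything else is the standard interplay between Hausdorff convergence, one-sided distances, and sequential compactness in $\real^d$.
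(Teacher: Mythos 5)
Your argument is correct, and it is essentially the standard one: the paper itself offers no proof of this proposition, citing Schneider's book (p.~57) instead, so there is nothing in the text to compare against step by step. Your forward direction (nearest-point selection for (1), $1$-Lipschitz continuity of $d(\cdot,A)$ plus closedness of $A$ for (2), enclosure $A_n\subseteq A+B^d(o,1)$ for large $n$ for (3)) and your converse by contradiction via Bolzano--Weierstrass, with the two one-sided maxima split into Case~1 (handled by (3) and (2)) and Case~2 (handled by compactness of $A$ and (1)), are all sound; the subsequence bookkeeping you flag is the only care needed and you handle it correctly. Your side remark that convexity of the $A_n$ is never used is worth retaining: the paper in fact applies the proposition to the sets $P(M,\eps)$, which need not be convex, so the convexity-free version is exactly what the application requires, whereas Schneider states the criterion for convex bodies.
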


Let $p_1,\ldots,p_s \in \real[x].$ The following theorem states that for the case when $P:=(p_1,\ldots,p_s)_{\ge 0}$ is non-empty and bounded, appropriately relaxing the inequalities $p_i(x) \ge 0$, which define $P$, we get a bounded semi-algebraic set that approximates $P$ arbitrarily well. 
Let us define
\begin{equation} \label{S:eps:def}
   P(M, \eps) := \setcond{x \in \real^d}{ (1+\|x\|^2)^M p_i(x) \ge -\eps \mfor 1 \le i \le s}
 \end{equation}
with $M \in \natur_0$ and $\eps>0.$ 
\begin{theorem} \label{semi:approx}
 Let $p_1,\ldots,p_s \in \real[x]$, $P:=(p_1,\ldots,p_s)_{\ge 0}$, and $P_0:=(p_1,\ldots,p_s)_{>0}.$ Assume that $P$ is non-empty and bounded.  Then there exists an algorithm that gets $p_1,\ldots,p_s$ and returns values $M \in \natur_0$ and $\eps_0 >0$ such that the following conditions are fulfilled: 
\begin{enumerate}[1.] 
 \item \label{boundedness:part}   $P(M,\eps)$ is bounded for $\eps =\eps_0.$
\item \label{convergence:part} $P(M,\eps), \ \eps \in (0,\eps_0],$ converges to $P$ in the Hausdorff distance, as $\eps \rightarrow 0.$ 
\end{enumerate} 
\eop
\end{theorem}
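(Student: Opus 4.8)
The plan is to construct the approximating sets $P(M,\eps)$ explicitly and verify the two conditions using \L{}ojasiewicz's Inequality together with the Hausdorff-convergence criterion of Proposition~\ref{Hausd:convergence}. The intuition is simple: relaxing each constraint $p_i(x) \ge 0$ to $p_i(x) \ge -\eps$ would already give a monotone family of sets shrinking to $P$ as $\eps \to 0$, \emph{except} that such a relaxed set need not be bounded even though $P$ is; the weight $(1+\|x\|^2)^M$ is inserted precisely to damp the relaxation at infinity and restore boundedness for a suitable exponent $M$.

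First I would address boundedness (Part~\ref{boundedness:part}). Pick a ball $B^d(o,\rho)$ with $P \subseteq \intr B^d(o,\rho)$; such a $\rho$ can be computed from $p_1,\ldots,p_s$ by Theorem~\ref{dec:problem} applied to a sentence asserting $P \subseteq B^d(o,\rho)$ for a given rational $\rho$ (increase $\rho$ until the sentence is true). On the compact set $A := B^d(o,\rho) \setminus \intr B^d(o,\rho/2)$ (a spherical shell disjoint from $P$, assuming $\rho$ large), the continuous semi-algebraic function $g(x) := \max(-p_1(x),\ldots,-p_s(x), 0)$ is strictly positive — indeed its zero set inside $B^d(o,\rho)$ is exactly $P$. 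Hence $\min_{x \in A} g(x) =: c > 0$, and by compactness $\max_{x \in A} (1+\|x\|^2)^M$ is bounded while the values of $g$ on the sphere of radius $t > \rho$ grow at most polynomially; more carefully, one shows that for $M$ large enough and $\eps$ small enough no point $x$ with $\|x\| \ge \rho$ can satisfy $(1+\|x\|^2)^M p_i(x) \ge -\eps$ for all $i$, because on the ray through such an $x$ the quantity $\min_i p_i$ stays bounded below by $-\const$ (polynomial growth) so $(1+\|x\|^2)^M \min_i p_i(x)$ tends to $-\infty$ whenever $\min_i p_i(x) < 0$, and it is $<0$ outside $P$. Quantifying this uniformly is where \L{}ojasiewicz enters: applying Theorem~\ref{Loj} on $A$ to $f = g$ and a suitable $g$-like comparison function bounds things from below; in any case the existence of admissible $(M,\eps_0)$ is expressible as a first-order sentence, so Theorem~\ref{dec:problem} lets the algorithm search for and certify a valid pair.

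Next, Part~\ref{convergence:part}: fix the $M$ and $\eps_0$ from above and let $\eps \to 0^+$. The family $P(M,\eps)$ is nested decreasingly in $\eps$, all contained in the fixed ball $B^d(o,\rho)$ (condition (3) of Proposition~\ref{Hausd:convergence}), and visibly $P \subseteq P(M,\eps)$ for every $\eps$, which gives condition (1) (take the constant sequence $a_k = a \in P$). For condition (2), suppose $a_{k_j} \in P(M,\eps_{k_j})$ with $\eps_{k_j} \to 0$ and $a_{k_j} \to a$; then for each $i$, $(1+\|a\|^2)^M p_i(a) = \lim_j (1+\|a_{k_j}\|^2)^M p_i(a_{k_j}) \ge \lim_j (-\eps_{k_j}) = 0$, and since $(1+\|a\|^2)^M > 0$ we get $p_i(a) \ge 0$, i.e. $a \in P$. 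Proposition~\ref{Hausd:convergence} then yields Hausdorff convergence. (Note that Proposition~\ref{Hausd:convergence} as stated is for convex sets, but only conditions (1)--(3) are used and its proof of sufficiency — ``inner'' and ``outer'' convergence plus uniform boundedness imply Hausdorff convergence — does not use convexity; alternatively one argues directly from the definition of $\disth$ using the nestedness and compactness.)

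The main obstacle is Part~\ref{boundedness:part}: turning the heuristic ``large $M$ damps the relaxation at infinity'' into a uniform, effectively certifiable choice of $M$ and $\eps_0$. The cleanest route is to avoid an explicit estimate altogether: show that the statement ``$P(M,\eps_0)$ is bounded by $\rho$'' is, for fixed integers $M$ and rational $\eps_0, \rho$, a first-order sentence over $\real$, prove (non-constructively, via the growth/\L{}ojasiewicz argument sketched above) that \emph{some} admissible triple exists, and then let the algorithm enumerate candidate triples and invoke Theorem~\ref{dec:problem} to halt at the first one that works. The convergence part is comparatively routine once boundedness is secured. \eop
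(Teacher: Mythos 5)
Your treatment of Part~\ref{convergence:part} and of the algorithmic certification (enumerate candidate pairs and test a first-order sentence via Theorem~\ref{dec:problem}) matches the paper's proof and is fine. The genuine gap is in the existence proof for Part~\ref{boundedness:part}, which is the mathematical heart of the theorem and which your enumeration scheme still requires (otherwise the search need not terminate). Your key claim --- that outside a large ball $(1+\|x\|^2)^M \min_i p_i(x)$ tends to $-\infty$ ``whenever $\min_i p_i(x)<0$'' --- is false for a fixed $M$ without a quantitative lower bound on how slowly $f(x):=-\min_i p_i(x)$ may decay to $0$ at infinity. The paper's own Remark~\ref{bounded:extension:remark} exhibits exactly this phenomenon: $p_1(x(t))\rightarrow 0^-$ along a curve with $\|x(t)\|\rightarrow+\infty$, so for $M=0$ the relaxed set is unbounded for every $\eps>0$; for any fixed $M$ the same failure occurs if $f$ decays faster than $\|x\|^{-2M}$, so one must prove that a semi-algebraic $f$ cannot decay superpolynomially. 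Your attempt to supply this via {\L}ojasiewicz on the spherical shell $A=B^d(o,\rho)\setminus\intr B^d(o,\rho/2)$ cannot work: $A$ is compact and bounded away from infinity, so Theorem~\ref{Loj} applied there says nothing about the asymptotic regime $\|x\|\rightarrow\infty$, and Theorem~\ref{Loj} itself is only stated for bounded closed semi-algebraic sets, so it cannot be applied directly to the unbounded exterior either.

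The missing idea is the paper's inversion trick, which converts the problem at infinity into a problem at a single boundary point of a compact set where Theorem~\ref{Loj} does apply. Concretely: after normalizing $P\subseteq\intr B^d(o,1)$, set $a(t):=\min\{f(x): 1\le\|x\|\le t\}$ for $t\ge 1$ (positive, non-increasing, continuous, semi-algebraic by Proposition~\ref{semialg:over:formula}); if $\inf_{t\ge1}a(t)>0$ take $M=0$, and otherwise define $b(t):=a(1/t)$ for $0<t\le1$, $b(0):=0$, a continuous semi-algebraic function on $[0,1]$ vanishing only at $t=0$. Applying Theorem~\ref{Loj} on $[0,1]$ to $b$ and $t^2$ gives $M\in\natur_0$ and $\gamma>0$ with $t^{2M}\le\gamma\,b(t)$, i.e.\ $t^{2M}a(t)\ge 1/\gamma$ for $t\ge1$, hence $(1+\|x\|^2)^M f(x)\ge 1/\gamma$ for all $\|x\|\ge1$, and $\eps_0=1/(2\gamma)$ makes $P(M,\eps_0)$ bounded. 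Without this step (or an equivalent {\L}ojasiewicz-at-infinity estimate), the existence of an admissible pair $(M,\eps_0)$ --- and hence the termination of your search procedure --- is not established.
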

\begin{proof} First we show the existence of $M$ and $\eps_0$ from the assertion, and after this we show that these two quantities are constructible. Let us derive the existence of $M$ and $\eps_0$ satisfying Condition~\ref{boundedness:part}. Since $P$ is bounded, after replacing $P$ by an appropriate homothetical copy, we may assume that $P \subseteq \intr B^d(o,1).$ By Proposition~\ref{semialg:over:formula}, the function $$f(x) := - \min_{1 \le i \le s} p_i(x)$$  is semi-algebraic. We also have $f(x)>0$ for all $x \in \real^d$ with $\|x\| \ge 1.$ Furthermore, the set $P(M,\eps)$ can be expressed with the help of $f(x)$ by  
\begin{equation} \label{07.12.11,16:37}
	P(M,\eps) = \setcond{x \in \real^d}{ (1+\|x\|^2)^M f(x) \le \eps}.
\end{equation} 
For $t \ge 1$ the function $$a(t) := \min \setcond{ f(x) }{1 \le \|x\| \le t}$$ is positive and non-increasing. Using Proposition~\ref{Hausd:convergence} it can be shown that $a(t)$ is continuous. Moreover, in view of Proposition~\ref{semialg:over:formula}, we see that $a(t)$ is semi-algebraic.  In  the case $\inf \setcond{a(t)}{t \ge 1} > 0$ Condition~\ref{boundedness:part} is fulfilled for $M=0$ and $\eps_0 = \frac{1}{2} \inf \setcond{a(t)}{t \ge 1}.$ In the opposite case we have $a(t) \rightarrow 0$, as $t \rightarrow +\infty.$ Then
 \begin{equation*}
   b(t) := \begin{cases} a(1/t), & 0 < t \le 1, \\ 0, & t=0 \end{cases}
 \end{equation*}
 is a continuous semi-algebraic function on $[0,1]$ with $b(t)=0$ if and only if $t=0.$ Thus, applying Theorem~\ref{Loj} to the functions $b(t)$ and $t^2$ defined on $[0,1]$, we see that there exist $M \in \natur_0$ and $\gamma > 0$ such that $t^{2M} \le \gamma \, b(t)$ for every $t \in [0,1].$ Consequently $t^{2M} a(t) \ge \frac{1}{\gamma}$ for every $t \ge 1.$ The latter implies that $(1+ \|x\|^2)^M f(x) \ge \frac{1}{\gamma},$ and Condition~\ref{boundedness:part} is fulfilled for $M$ as above and $\eps_0 = \frac{1}{2 \gamma}.$   Now we show that Condition~\ref{boundedness:part} implies Condition~\ref{convergence:part}. Assume that Condition~\ref{boundedness:part} is fulfilled. Then the set $P(M,\eps)$ is bounded for all $\eps \in (0,\eps_0].$ Hence $\disth(P,P(M,\eps))$ is well defined for all $\eps \in (0,\eps_0].$ Consider an arbitrary sequence $(t_j)_{j=1}^{+\infty}$ with $t_j \in (0,\eps_0]$ and $t_j \rightarrow 0,$ as $j \rightarrow + \infty,$  using Proposition~\ref{Hausd:convergence} we can see that $\disth(P,P(t_j)) \rightarrow 0,$ as $j \rightarrow +\infty.$ Consequently, Condition~\ref{convergence:part} is fulfilled.

Finally we show that $\eps_0$ and $M$ are constructible. For determination of $M$ one can use the following ``brute force'' procedure.
\begin{description} 
	\item[Procedure:] Determination of $M.$
	\item[Input:] $p_1,\ldots,p_s  \in \real[x].$ 
	\item[Output:] A number $M \in \natur_0$ such that for some $\eps_0>0$ the set $P(M,\eps_0)$ is bounded.
\end{description}  
\begin{enumerate}[1:] 
	\item Set $M := 0.$ 
	\item For $i \in \{1,\ldots,s \}$ introduce the first-order formula $$\Phi_i:=" (1+x_1^2+ \cdots +x_d^2)^M \, p_i(x_1,\ldots,x_d) \ge -\eps_0 "$$
	with free variables $x_1,\ldots,x_d, \eps_0.$
	\item Test the existence of $\eps_0>0$ for which $P(M,\eps_0)$ is bounded. More precisely, determine whether the sentence 
	$$
		\Psi:="(\exists \eps_0 ) (\exists \tau)  \ \ (\eps_0>0) \wedge (\forall x_1) \ldots (\forall x_d) \left( \Phi_1 \wedge \ldots \wedge \Phi_s \rightarrow (x_1^2+\cdots + x_d^2 \le \tau^2 ) \right) "
	$$
	is true or false (cf. Theorem~\ref{dec:problem}).
	\item If $\Psi$ is true, return $M$ and stop. Otherwise set $M:=M + 1$ and go to Step~2. 
\end{enumerate} 
In view of the conclusions made in the proof, the above procedure terminates after a finite number of iterations. For determination of $\eps_0$ we can use a similar procedure. We start with $\eps_0:=1$ and assign $\eps_0:=\eps_0/2$ at each new iteration, terminating the cycle as long as $P(M,\eps_0)$ is bounded.  
\end{proof} 

\begin{remark} \label{bounded:extension:remark}
We wish to show Theorem~\ref{semi:approx} cannot be improved by setting $M:=0,$ since $P(0,\eps)$ may be unbounded for all $\eps>0.$ Let us consider the following example. Let  $M=0,$  $d=2,$ $s=1,$ and $$p_1(x) = - (x_1-x_2)^2 - (x_1^2+x_2^2-1) \, (1+x_1^2 - x_2^2)^2.$$ Then the set $P=(p_1)_{\ge 0}$ is bounded. In fact, if $\|x\|>1,$ then the term  $x_1^2+x_2^2-1$, appearing in the definition of $p_1$, is positive. But the remaining terms $x_1-x_2$ and $1+x_1^2-x_2^2$ cannot vanish simultaneously. Hence, $p_1(x) <0$ for every $x$ with $\|x\|>1,$ which shows that $P \subseteq B^2(o,1).$ Furthermore, since $p_1(o)<0,$ we see that $P$ has non-empty interior (which shows that our example is non-degenerate enough). Let us show that $P(M,\eps)= \setcond{x \in \real^2}{q_1(x) \ge -\eps}$ is unbounded for every $\eps >0.$ For $x(t) := (t, \sqrt{1+t^2} )$ with $t \ge 0$ one has $\|x(t) \| = \sqrt{1+ 2t^2} \rightarrow +\infty$ and $p_1(x(t)) = - \left(t - \sqrt{1+t^2} \right)^2\rightarrow 0^{-},$ as $t \rightarrow +\infty$; see also Fig.~\ref{bd:ubd:fig}. This implies unboundedness of $P(M,\eps).$ 
\end{remark}

Throughout the rest of the paper we shall use the following polynomials associated to $p_1,\ldots,p_s \in \real[x].$ For $M \in \natur_0$, $\lambda>0$, and $k \in \natur$ we define 
	\begin{equation} \label{g:def}
		g_{M,\lambda,k}(x):= \frac{1}{s} \sum_{i=1}^s \Bigl(1 - \frac{1}{\lambda} (1+ \|x\|^2)^M \, p_i(x) \Bigr)^{2k}
	\end{equation}
If $X:=X(p_1,\ldots,p_s)$ is finite, we define 
	\begin{equation*}
	h_{\mu}(x) := \prod_{v \in \cX} \left(\frac{\|x-v\|}{\mu}\right)^2,
	\end{equation*}
where $\mu>0.$ 

\begin{lemma} \label{sublevel:approx:part} Let $p_1,\ldots,p_s \in \real[x]$, $P:=(p_1,\ldots,p_s)_{\ge 0}$, and $P_0:=(p_1,\ldots,p_s)_{>0}.$ Assume that $P$ is non-empty and bounded.  Then for every $\eps>0$, $M \in \natur_0$, $\lambda>0,$ and $k \in \natur$ satisfying 
	\begin{eqnarray} \label{07.11.23,17:44}
		\lambda & \ge & \max_{1 \le i \le s} \max_{x \in P} \, (1+ \|x\|^2)^M p_i(x),  \\
		s & \le &  \left(1+  \frac{\eps}{\lambda} \right)^{2k}\label{m:eps:cond} 
	\end{eqnarray} 
	the polynomial $g(x):=g_{M,\lambda,k}(x)$ fulfills the relations
\begin{eqnarray} 
	 P_0  & \subseteq  & \setcond{x \in \real^d}{ g(x) < 1}  \subseteq P(M,\eps), \label{08.04.08,16:28} \\
	P & \subseteq & \setcond{x \in \real^d}{ g(x) \le 1} \subseteq P(M,\eps). \label{07.11.12,16:41}
\end{eqnarray}
Furthermore, there exists an algorithm that gets $p_1,\ldots,p_s$, $\eps>0$, and $M \in \natur_0$ and constructs $g=g_{M,\lambda,k} \in \real[x]$ satisfying \eqref{08.04.08,16:28} and \eqref{07.11.12,16:41}.
\end{lemma}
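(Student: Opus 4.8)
The plan is to verify the two chains of inclusions in \eqref{08.04.08,16:28} and \eqref{07.11.12,16:41} directly from the defining formula \eqref{g:def}, and then to explain how the free parameters $\lambda$ and $k$ can be chosen algorithmically so that \eqref{07.11.23,17:44} and \eqref{m:eps:cond} hold. Write $q_i(x):=(1+\|x\|^2)^M p_i(x)$ for brevity, so that $P(M,\eps)=\{x: q_i(x)\ge-\eps\text{ for all }i\}$ and $g(x)=\frac1s\sum_{i=1}^s\bigl(1-q_i(x)/\lambda\bigr)^{2k}$.

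First I would prove the \emph{left} inclusions $P_0\subseteq\{g<1\}$ and $P\subseteq\{g\le1\}$. Fix $x\in P$. Then $p_i(x)\ge 0$, hence $q_i(x)\ge 0$, and by \eqref{07.11.23,17:44} we have $q_i(x)\le\lambda$; therefore $0\le 1-q_i(x)/\lambda\le 1$, so each summand $\bigl(1-q_i(x)/\lambda\bigr)^{2k}$ lies in $[0,1]$ and $g(x)\le 1$. If moreover $x\in P_0$, then $p_i(x)>0$ for all $i$, so each summand is strictly less than $1$, giving $g(x)<1$; this yields $P_0\subseteq\{g<1\}$. Next I would prove the \emph{right} inclusions $\{g\le1\}\subseteq P(M,\eps)$ (which also covers $\{g<1\}$). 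Suppose $x\notin P(M,\eps)$, i.e.\ $q_j(x)<-\eps$ for some index $j$. Then $1-q_j(x)/\lambda>1+\eps/\lambda>1$, so that single summand already satisfies $\bigl(1-q_j(x)/\lambda\bigr)^{2k}>(1+\eps/\lambda)^{2k}\ge s$ by \eqref{m:eps:cond}; since all other summands are nonnegative, $g(x)>\frac1s\cdot s=1$, hence $x\notin\{g\le1\}$. Taking contrapositives gives both right inclusions, completing \eqref{08.04.08,16:28} and \eqref{07.11.12,16:41}.

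For the algorithmic part, the input is $p_1,\ldots,p_s$, $\eps>0$, and $M\in\natur_0$, and I must produce admissible $\lambda$ and $k$. For $\lambda$: the quantity $\max_{1\le i\le s}\max_{x\in P}q_i(x)$ need not be finite a priori, but since $P$ is non-empty and bounded we may, exactly as in the proof of Theorem~\ref{semi:approx}, first compute a radius $\rho$ with $P\subseteq B^d(o,\rho)$ (this itself is a decidable condition by Theorem~\ref{dec:problem}); then each continuous $q_i$ attains a maximum over the compact set $P$, and a rational upper bound $\lambda$ for all these maxima can be found by a brute-force search testing sentences of the form ``$(\forall x)\,(p_1(x)\ge0\wedge\cdots\wedge p_s(x)\ge0)\rightarrow q_i(x)\le\lambda$'' for $\lambda=1,2,4,\ldots$ until all $s$ of them come out true — each test being decidable by Theorem~\ref{dec:problem}. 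Having fixed such a $\lambda$, the condition \eqref{m:eps:cond} reads $2k\ge\log s/\log(1+\eps/\lambda)$, so we simply set $k:=\bigl\lceil\tfrac12\log s/\log(1+\eps/\lambda)\bigr\rceil$ (or, avoiding logarithms, increment $k$ until $(1+\eps/\lambda)^{2k}\ge s$, which halts since $1+\eps/\lambda>1$). Then $g:=g_{M,\lambda,k}$ is computed from \eqref{g:def} by expanding the powers and the sum. By the first two paragraphs this $g$ satisfies \eqref{08.04.08,16:28} and \eqref{07.11.12,16:41}.

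The only genuine obstacle is the existence and effective computation of $\lambda$ in \eqref{07.11.23,17:44}: one must know that $\sup_{x\in P}q_i(x)<\infty$, which relies on the boundedness of $P$ (so that $P$ is compact and $q_i$ is bounded on it), and one must extract a concrete rational bound, for which the quantifier-elimination decision procedure of Theorem~\ref{dec:problem} combined with a doubling search is the standard device. Everything else — the two inclusion arguments and the choice of $k$ — is elementary once $\lambda$ is in hand.
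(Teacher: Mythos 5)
Your proof is correct and follows essentially the same route as the paper: the left inclusions come directly from \eqref{07.11.23,17:44} forcing each summand of $g$ into $[0,1]$ (strictly below $1$ on $P_0$), the right inclusion is the paper's estimate via \eqref{m:eps:cond} stated in contrapositive form, and the constructibility of $\lambda$ (and then $k$) by an incremental search with the decision procedure of Theorem~\ref{dec:problem} is exactly the paper's device.
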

\begin{proof}
Inclusions $P_0 \subseteq \setcond{x \in \real^d}{ g(x) < 1}$ and $P \subseteq  \setcond{x \in \real^d}{ g(x) \le 1}$ follow from \eqref{07.11.23,17:44}. It remains to show the inclusion $\setcond{x \in \real^d}{ g(x) \le 1} \subseteq P(M,\eps).$ Assume that $g(x) \le 1. $ Then 
$$\max_{1 \le i \le s}  \Bigl(1 - \frac{1}{\lambda} (1+ \|x\|^2)^M \, p_i(x) \Bigr)^{2k} \le s \stackrel{\eqref{m:eps:cond}}{\le} \left(1+ \frac{\eps}{\lambda}\right)^{2k}.$$ 
Consequently 
$$\max_{1 \le i \le s} \Bigl(1 - \frac{1}{\lambda} (1+ \|x\|^2)^M \, p_i(x) \Bigr) \le 1 +  \frac{\eps}{\lambda},$$
or equivalently, $(1+\|x\|^2)^M f(x) \le \eps$. Hence $x \in P(M,\eps).$ 

Now let us discuss the constructibility of $g(x).$ It suffices to show the constructibility of $\lambda$ satisfying \eqref{07.11.23,17:44}. For determination of $\lambda$ we iterate starting with $\lambda:=1$, set $\lambda:=\lambda+1$ at each new step, and use \eqref{07.11.23,17:44}, reformulated as a first-order formula, as a condition for terminating  the cycle. 
\end{proof}

	\begin{FigTab}{c}{0.9mm}
	\begin{picture}(80,70)
	\put(0,2){\IncludeGraph{width=70\unitlength}{bounded-unbounded.eps}}
	\put(70,10){$x_1$}
	\put(10,70){$x_2$}
	\put(36,38){$P$}
	\end{picture} \\
	\parbox[t]{0.95\textwidth}{\mycaption{Illustration to Remark~\ref{bounded:extension:remark}: the level sets given by equations $p_1(x)=0,$ $p_1(x)=-0.3,$ $p_1(x)=-0.5$, $p_1(x)=-0.7$ and a part of the curve with parametrization $x(t)$\label{bd:ubd:fig}}} 
	\end{FigTab}

One can see that Theorem~\ref{approx:thm1} from the introduction is a direct consequence of Theorem~\ref{semi:approx} and Lemma~\ref{sublevel:approx:part}.



\begin{theorem} \label{07.12.10,11:53}
	Let $p_1,\ldots,p_s \in \real^d$, $P:=(p_1,\ldots,p_s)_{\ge 0}$, and $P_0:=(p_1,\ldots,p_s)_{>0}$. Assume that $P$ is non-empty and bounded,  $\cX:=\cX(p_1,\ldots,p_s)$ is finite, and $n:=n(p_1,\ldots,p_s) < s.$ Then there exists an algorithm that gets $p_1,\ldots,p_s,$ $X,$ $M \in \natur_0$, and $\eps>0$ and returns $q \in \real[x]$ fulfilling the relations
	\begin{equation*}
	\begin{array}{rcccl}
		P_0 & \subseteq & (q)_{>0} & \subseteq & P(M,2  \eps) , \\
		P & \subseteq & (q)_{\ge 0} & \subseteq & P(M,2  \eps), \\
			& &	X & \subseteq & \setcond{x \in \real^d}{q(x)=0}.  
	\end{array}
	\end{equation*}  

	Furthermore, $q$ can be defined by 
	$$
		q(x):=\sigma_{s-n+1}(p_1(x),\ldots,p_s(x)) - g_{M,\lambda,k}(x)^l h_{\mu}(x)^m,
	$$
	where $k, l, m  \in \natur$, $\lambda>0$, and 
	$\mu>0.$ 
	\eop
\end{theorem}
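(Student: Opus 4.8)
The plan is to build $q$ as indicated, $q := \sigma_{s-n+1}(p_1,\dots,p_s) - g^{l}\,h_\mu^{m}$, where $g = g_{M,\lambda,k}$ and $\sigma_{s-n+1}$ is the elementary symmetric polynomial of degree $s-n+1$ in the $s$ arguments $p_1(x),\dots,p_s(x)$, and then to verify the three chains of inclusions by a pointwise analysis that distinguishes three regimes of a point $x$: (a) $x$ deep inside $P$, (b) $x$ near the ``deepest'' stratum $X$, and (c) $x$ outside $P$ or near its boundary. First I would record the elementary facts about $\sigma := \sigma_{s-n+1}(p_1,\dots,p_s)$: on $P$ all $p_i(x)\ge 0$, and at any $x\in P$ at most $n$ of the $p_i$ vanish (definition of $n$), so at least $s-n$ of them are nonnegative and, in fact, at every $x\in P_0$ all $s$ are positive; hence $\sigma(x) > 0$ on $P_0$ and $\sigma(x)\ge 0$ on $P$. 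Moreover $\sigma(x) = 0$ exactly when at least $n$ of the $p_i(x)$ vanish, so on $P$ the zero set of $\sigma$ restricted to $P$ is precisely $X$; and at $v\in X$ we have $h_\mu(v)=0$, so $q(v) = \sigma(v) - 0 = 0$, giving the third inclusion $X\subseteq (q)_{=0}$ for free, independently of the parameters.

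Next I would establish the inclusions $P_0\subseteq (q)_{>0}$ and $P\subseteq (q)_{\ge 0}$. By Lemma~\ref{sublevel:approx:part} (applied with $\eps$ and $M$), choosing $\lambda$ as in \eqref{07.11.23,17:44} and $k$ as in \eqref{m:eps:cond}, we get $g(x)\le 1$ on $P$ and $g(x)<1$ on $P_0$; thus $g^{l}\le 1$ on $P$ and $<1$ on $P_0$. The subtle point is near $X$: there $\sigma(x)$ is small (it vanishes on $X$), so subtracting $g^l h_\mu^m$ could push $q$ negative. This is exactly where {\L}ojasiewicz's Inequality (Theorem~\ref{Loj}) enters: applied on the compact set $P$ to the pair $\big(\text{dist}(x,X)\text{-type function built from } h_\mu,\ \sigma\big)$ — more precisely, since $\{\sigma = 0\}\cap P = X = \{h_\mu = 0\}\cap P$, there are $N\in\natur$, $c\ge 0$ with $h_\mu(x)^N \le c\,\sigma(x)$ on $P$. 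Choosing $m$ large (a multiple of $N$) and then $l$ large enough to absorb the constant $c$ via the bound $g^l\le 1$, one forces $g(x)^l h_\mu(x)^m \le \sigma(x)$ on $P$ with strict inequality on $P_0$ (where $g^l<1$ and either $h_\mu>0$ and $\sigma>0$, or one argues directly). Hence $q\ge 0$ on $P$ and $q>0$ on $P_0$. All the quantifiers here are expressible as first-order sentences over $\real$, so the requisite $N$, and then valid $l,m,\lambda,k,\mu$, can be found by the brute-force search of Theorem~\ref{dec:problem}, exactly as in the proofs of Theorem~\ref{semi:approx} and Lemma~\ref{sublevel:approx:part}; I would present the algorithm as: first construct $g$ via Lemma~\ref{sublevel:approx:part}, then increment $m$, $l$ (and if needed shrink $\mu$) checking the first-order conditions until they hold.

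The remaining and, I expect, the main obstacle is the ``outer'' inclusions $(q)_{\ge 0}\subseteq P(M,2\eps)$ and $(q)_{>0}\subseteq P(M,2\eps)$. Suppose $x\notin P(M,2\eps)$; then some $p_{i_0}(x) < -2\eps(1+\|x\|^2)^{-M}$, in particular $x\notin P$. The difficulty is that $\sigma_{s-n+1}(p_1,\dots,p_s)$ is not sign-definite off $P$ — a product/sum of mixed-sign terms — so I cannot simply say $\sigma(x)<0$. The idea is that the term $g(x)^l h_\mu(x)^m$ is designed to dominate: by Lemma~\ref{sublevel:approx:part} (its proof, via $P(M,\eps)$), when $x\notin P(M,\eps)$ we have $g(x) > 1$, and raising to a large power $l$ makes $g(x)^l$ huge, while $h_\mu(x)^m$ is bounded below by a positive polynomial-in-$\|x\|$ quantity that cannot decay too fast; meanwhile $\sigma(x)$ grows only polynomially. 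More carefully, one splits: on the compact shell $P(M,2\eps)\setminus P(M,\eps)$ a single {\L}ojasiewicz/compactness estimate controls things, while for $\|x\|$ large one uses that $g(x)\to\infty$ faster (after choosing $l$) than $|\sigma(x)|/h_\mu(x)^m$ — both sides being semi-algebraic functions of $x$, so a single application of {\L}ojasiewicz's inequality on a suitable one-point compactification-type parametrization (as in the $a(t),b(t)$ trick in the proof of Theorem~\ref{semi:approx}) yields the needed exponent $l$. Assembling: choose $M,\eps_0$ (Theorem~\ref{semi:approx}), then on input $\eps\le\eps_0$ pick $\lambda,k$ (Lemma), then $m$ from {\L}ojasiewicz applied to $(h_\mu,\sigma)$ on $P$, then $l$ large enough so that simultaneously $g^l h_\mu^m\le\sigma$ on $P$ and $g^l h_\mu^m > \sigma$ (hence $q<0$) off $P(M,2\eps)$; each ``large enough'' is certified by deciding an explicit first-order sentence. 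I would end by noting the case $d=2,n=2$ is the one pictured in Figure~\ref{Fig:semi-alg:2d}, where $g^l h_\mu^m$ plays the role of the ``correction'' that bends $(q)_{\ge 0}$ down to touch $P$ exactly at $X$.
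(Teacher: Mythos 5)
Your overall architecture (the same $q=\sigma_{s-n+1}(p_1,\ldots,p_s)-g^l h_\mu^m$, Łojasiewicz to protect the points of $\cX$, domination by $g^lh_\mu^m$ outside) matches the paper, but the crux — the outer inclusion $(q)_{\ge 0}\subseteq P(M,2\eps)$ — is not actually proved. You argue that outside $P(M,\eps)$ one has $g>1$, that ``$g(x)^l$ is huge'' and that ``$g(x)\to\infty$ faster than $|\sigma(x)|/h_\mu(x)^m$''. But nothing in the setup gives $g\to\infty$ on the unbounded complement of $P(M,2\eps)$: all one gets there is the \emph{uniform} lower bound $g(x)\ge\frac1s\bigl(1+\frac{2\eps}{\lambda}\bigr)^{2k}\ge\bigl(\frac{\lambda+2\eps}{\lambda+\eps}\bigr)^{2k}>1$ (the paper's \eqref{a:g:eps:bound}); $g$ can stay bounded along curves going to infinity, while $|\sigma_{s-n+1}(p_1(x),\ldots,p_s(x))|$ is a priori unbounded there and $h_\mu^m$ has fixed polynomial growth of degree $2m\card{\cX}$, which need not exceed $\deg\sigma_{s-n+1}$. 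So no fixed $l$ follows from the growth comparison you state, and your fallback — ``a single application of Łojasiewicz on a one-point compactification-type parametrization'' — is not available as formulated: Theorem~\ref{Loj} requires a bounded closed domain, and you never specify the functions or the compact set, i.e.\ the needed uniform domination is asserted, not derived. The missing ingredient, which is exactly what the paper supplies, is a \emph{pointwise} bound of $|\sigma_{s-n+1}|$ by a power of $g$: wherever $g(x)>1$, the size of $g(x)$ controls every $|(1+\|x\|^2)^M p_j(x)|$, whence $|\sigma_{s-n+1}(p_1(x),\ldots,p_s(x))|\le\binom{s}{n-1}\lambda^{s-n+1}(s+1)^{s-n+1}g(x)^{s-n+1}$; combined with $h_\mu(x)\ge(\rho/\mu)^{2\card{\cX}}$ outside $P(M,2\eps)$ (the $\rho$-balls around $\cX$ sit inside $\{g\le1\}$) and the uniform bound $g>1$ above, one explicit large $l$ forces $q<0$ on the whole unbounded complement, with no compactness argument at all.

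There is also a flaw in your inner inclusion: after obtaining $h_\mu^N\le c\,\sigma_{s-n+1}$ on $P$ from Theorem~\ref{Loj}, you propose to ``absorb the constant $c$ via the bound $g^l\le1$''. That cannot work: from $g\le1$ on $P$ you only get $g^lh_\mu^m\le c\,\sigma_{s-n+1}$, and no choice of $l$ removes $c$. One needs a bound strictly below $1$; the paper proves $\max_{x\in P}g(x)\le\alpha<1$ — this is precisely where the hypothesis $n<s$ is used, since $g(x)=1$ at a point of $P$ would force all $p_i(x)=0$ — and then chooses $l$ with $\tau\alpha^l<1$ as in \eqref{alpha<1} and \eqref{a:07.11.15,18:56}. (Alternatively one could take $\mu>\diam(P)$, so $\max_Ph_\mu<1$, and let a large $m$ absorb the constant.) Note also that in your algorithm sketch ``shrink $\mu$'' goes the wrong way: $\mu\ge\diam(P)$ is what guarantees $h_\mu\le1$ on $P$. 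Apart from these points, your single global Łojasiewicz application on $P$ with the pair $(h_\mu,\sigma_{s-n+1})$ is a reasonable variant of the paper's split into the cases near and away from $\cX$, and the observations that $q$ vanishes on $\cX$ and that all parameters can be certified by deciding first-order sentences (Theorem~\ref{dec:problem}) agree with the paper.
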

\begin{proof}
	Analogously to the proof of Theorem~\ref{semi:approx}, we first show the existence of $q$ from the assertion and then we derive the constructive part of the theorem. We fix $\lambda$ and $k$ satisfying \eqref{07.11.23,17:44} and \eqref{m:eps:cond} and set $g(x):=g_{M,\lambda,k}(x).$ Let us derive the inclusions 
	$P_0 \subseteq (q)_{>0}$ and $P \subseteq (q)_{\ge 0}$. First we show that 
	\begin{equation} \label{g:bound}
		\max_{x \in P} g(x) < 1. 
	\end{equation}
	Let $I_x:=I_x(p_1,\ldots,p_s).$ Since $n<s,$ for every $x \in P$ the set $I_x$ is properly contained in $\{1,\ldots,s\}$. Consequently, for every $x \in P$ we get
	$$	
		g(x) = \frac{1}{s} \left( \card{I_x} + \sum_{i \in \{1,\ldots,s\} \setminus I_x} \left(1- \frac{1}{\lambda} (1+\|x\|^2)^M p_i(x)\right)^{2k} \right) < 1. 
	$$
	Thus, \eqref{g:bound} is fulfilled. Therefore we can fix $\alpha$ with 
	\begin{equation}
	\label{alpha<1}
		\max_{x \in P} g(x) \le \alpha < 1.
	\end{equation} 
	In view of \eqref{alpha<1} and the finiteness of $\cX,$  we can fix $\rho>0$ such that 
	\begin{equation}  \label{a:07.11.12,16:43}
		\bigcup_{v \in \cX} B^d(v,\rho) \subseteq \setcond{x \in \real^d}{g(x) \le 1}. 
	\end{equation} 
	and 
	\begin{equation} \label{disj:balls}
	B^d(v,\rho) \cap B^d(w,\rho) = \emptyset
	\end{equation} 
	 for all $v, w \in \cX$ with $v \ne w.$ 

	Let us consider an arbitrary $x \in P.$ We show that, for an appropriate choice of $l \in \natur$ and $m \in \natur$ we have $q(x) \ge 0,$ and the latter inequality is strict for $x \in P_0.$ 

	\emph{Case~A:} $x \in P \cap \left( \bigcup_{v \in \cX} B^d(v,\rho) \right).$ Let us fix $w \in\cX$ such that $\|x-w\| \le \rho.$  Since $x \in P$, we have $\sigma_{s-n+1}(p_1(x),\ldots,p_s(x))\ge 0.$ Furthermore, due to the choice of $\rho,$ equality is attained if and only if $x=w.$  Let $\mu > 0$ be an arbitrary scalar satisfying
	\begin{equation} \label{mu:bound} 
		\mu \ge \diam(P) := \max \setcond{ \|x'-x''\| }{x', x'' \in P}.
	\end{equation} 
	Applying Theorem~\ref{Loj} to the functions $\sigma_{s-n+1}(p_1(x),\ldots,p_s(x))$ and $\left(\frac{\|x-w\|}{\mu}\right)^2$ restricted to $B^d(w,\rho) \cap P$, we have 
	\begin{equation*} 
		\left(\frac{\|x-w\|}{\mu}\right)^{2 m(w)} \le \tau(w) \cdot \sigma_{s-n+1}(p_1(x),\ldots,p_s(x)) 
	\end{equation*} 
	for appropriate parameters $\tau(w) > 0$ and $m(w) \in \natur$ independent of $x.$ In view of the choice of $\mu$ we deduce
\begin{equation} \label{a:07.11.12,17:03}
  \left(\frac{\|x-w\|}{\mu}\right)^{2 m} \le \tau \cdot \sigma_{s-n+1}(p_1(x),\ldots,p_s(x)),
\end{equation} 
where $\tau := \max_{v \in \cX} \tau(v)$ and $m:=\max_{v \in \cX} m(v).$ 
	We have 
	\begin{align}
		g(x)^l h_\mu(x)^m & \stackrel{\eqref{alpha<1}}{\le} \alpha^l \, h_\mu(x)^m = \alpha^l \, \left(\frac{\|x-w\|}{\mu}\right)^{2m} \, \prod_{v \in \cX \setminus \{w\}} \left(\frac{\|x-v\|}{\mu}\right)^{2m} \stackrel{\eqref{mu:bound}}{\le}  \alpha^l \left( \frac{\|x-w\|}{\mu} \right)^{2m}  \nonumber \\
					& \stackrel{\eqref{a:07.11.12,17:03}}{\le}  \tau \, \alpha^l  \, \sigma_{s-n+1}(p_1(x),\ldots,p_s(x)). \label{a:07.12.04,11:00}
	\end{align}
	In view of  \eqref{alpha<1}, for all sufficiently large $l \in \natur$ the inequality
	\begin{equation} \label{a:07.11.15,18:56}
		\tau \, \alpha^l < 1,
	\end{equation}
	 is fulfilled. Assuming that \eqref{a:07.11.15,18:56} holds, and taking into account \eqref{a:07.12.04,11:00}, we have $q(x) \ge 0.$  

	Now assume that  $x$ lies in $P_0 \cap \left( \bigcup_{v \in \cX} B^d(v,\rho) \right).$ Then, if $l$ satisfies \eqref{a:07.11.15,18:56}, we get  $q(x) > 0.$ 

	\emph{Case~B:} $ x \in P \setminus \bigcup_{v \in \cX} B^d(v,\rho).$ Then $\|x-v\| \ge \rho$ for every $v \in X.$ From the definition of elementary symmetric functions and the assumptions it easily follows that
	\begin{equation*}
		\min \Bigsetcond{ \sigma_{s-n+1}(p_1(x'),\ldots,p_s(x'))}{x' \in P \setminus \bigcup_{v \in \cX} \intr B^d(v,\rho)} > 0.
	\end{equation*} 
	Let us choose $\gamma$ with 
	\begin{equation}  \label{gamma:bound}
		0  < \gamma \le  \min \Bigsetcond{ \sigma_{s-n+1}(p_1(x'),\ldots,p_s(x'))}{x' \in P \setminus \bigcup_{v \in \cX} \intr B^d(v,\rho)} .
	\end{equation} 
	Thus, we get the bounds $$g(x)^l h_\mu(x)^m \stackrel{\eqref{alpha<1}}{\le} \alpha^l h_\mu(x)^m \stackrel{\eqref{mu:bound}}{\le}  \alpha^l$$ and $\gamma \le \sigma_{s-n+1}(p_1(x),\ldots,p_s(x)).$ 		
 In view of \eqref{alpha<1}, for all sufficiently large $l \in \natur$ the inequality
	\begin{equation} \label{a:07.11.15,18:57}
		\alpha^l < \gamma
	\end{equation} 
	is fulfilled. Assuming that \eqref{a:07.11.15,18:57} is fulfilled, we obtain $q(x) > 0.$ 
	
	Now we show the inclusion $(q)_{\ge 0} \subseteq P(M,2 \eps)$. Consider an arbitrary $x \in \real^d \setminus P(M,2\eps).$ Then $$\min_{1 \le i \le s} (1+\|x\|^2)^M \, p_i(x)  \le -2 \, \eps,$$ which is equivalent to 
	\begin{equation} \label{08.04.04,17:34}
		\max_{1 \le i \le s} \Biggl( 1- \frac{1}{\lambda} (1+\|x\|^2)^M \, p_i(x) \Biggr) \ge 1+   \frac{2 \eps}{\lambda}.
	\end{equation}
	The latter implies that 
	$$
		\sum_{i=1}^s \Bigl(1- \frac{1}{\lambda} (1+\|x\|^2)^M \, p_i(x) \Bigl)^{2k} \ge \left(1+  \frac{2 \eps}{\lambda}\right)^{2k},
	$$
	and therefore
	\begin{equation} \label{a:g:eps:bound}
		g(x) \ge \frac{1}{s} \left(1+ \frac{2 \eps}{\lambda} \right)^{2k} \stackrel{\eqref{m:eps:cond}}{\ge} \left( \frac{\lambda+2 \eps}{\lambda + \eps} \right)^{2k} > 1. 
	\end{equation}
	We have 
	$$
	\begin{array}{rcl}
		\bigl| \sigma_{s-n+1}(p_1(x),\ldots,p_s(x)) \bigr| & \le & \sigma_{s-n+1}(|p_1(x)|,\ldots,|p_s(x)|)\\
			& \le &  \sigma_{s-n+1}(\underbrace{1,\ldots,1}_s) \max\limits_{1 \le j \le s} |p_j(x)|^{s-n+1}\\
	& = & \binom{s}{n-1} \, \max\limits_{1 \le j \le s}   |p_j(x)|^{s-n+1} \\ 
	& \le & \binom{s}{n-1} \, \lambda^{s-n+1} \max\limits_{1 \le j \le s} \Bigl| \frac{1}{\lambda} \, (1+\|x\|^2)^M \, p_j(x) \Bigr|^{s-n+1}  \\	
		&  \le &    \binom{s}{n-1} \, \lambda^{s-n+1} \left( \max\limits_{1 \le j \le s} \Bigl| 1- \frac{1}{\lambda} (1+\|x\|^2)^M p_j(x) \Bigr| + 1 \right)^{s-n+1} \\ 
		&  \stackrel{\eqref{08.04.04,17:34}}{\le} &    \binom{s}{n-1} \, \lambda^{s-n+1} \left( \max\limits_{1 \le j \le s} \Bigl| 1- \frac{1}{\lambda} (1+\|x\|^2)^M p_j(x) \Bigr|^{2k} + 1 \right)^{s-n+1} \\ 
	& \stackrel{\eqref{a:g:eps:bound}}{\le} & \binom{s}{n-1} \, \lambda^{s-n+1} \bigl( s \, g(x) + 1 \bigr)^{s-n+1} \\ 
	& \stackrel{\eqref{a:g:eps:bound}}{\le} & \binom{s}{n-1} \, \lambda^{s-n+1} \bigl( s \, g(x) + g(x) \bigr)^{s-n+1} \\
& = & \binom{s}{n-1} \lambda^{s-n+1} (s+1)^{s-n+1} g(x)^{s-n+1}.
	\end{array}
	$$
	The above estimate for $|\sigma_{s-n+1}(p_1(x),\ldots,p_s(x))|$ together with the estimate  $$h_\mu(x)^m = \prod_{v \in \cX} \left(\frac{\|x-v\|}{\mu}\right)^{2m} \ge \left(\frac{\rho}{\mu}\right)^{2 m \, \card{\cX}} > 0$$ and \eqref{a:g:eps:bound} implies that 
	$|\sigma_{s-n+1}(p_1(x),\ldots,p_s(x)) | \le \frac{1}{2} g(x)^l h_\mu(x)^m$ if $l$ fulfills the inequality 
	\begin{equation} \label{a:07.11.15,18:58}
		2 \, \binom{s}{n-1} \, \lambda^{s-n+1} (s+1)^{s-n+1} \le \left( \frac{\lambda + 2  \eps}{\lambda + \eps} \right)^{2k \, (l-s+n-1)} \, \left(\frac{\rho}{\mu}\right)^{2 m \card{\cX}}.
	\end{equation}
	Since $\frac{\lambda + 2 \eps}{\lambda +  \eps} > 1$,  \eqref{a:07.11.15,18:58} is fulfilled if $l \in \natur$ is large enough. Thus, we obtain that the inequality $q(x) < 0$ holds for all sufficiently large $l.$ 

	Now we show the constructive part of the assertion. We present a sketch of a possible procedure that determines $q.$  It suffices to evaluate the parameters $k, l, m, \lambda,$ and  $\mu$ involved in the definition of $q$. Constructibility of $\lambda$ and $k$ follows from Lemma~\ref{sublevel:approx:part}. Let us apply Theorem~\ref{dec:problem} in the same way as in the previous proofs. Determine the following parameters in the given sequence. We can determine $m$ satisfying \eqref{a:07.11.12,17:03} for an appropriate $\tau >0$ and all $x \in P \cap \left(\bigcup_{v \in \cX} B^d(v,\rho) \right)$ using the same idea as in the procedure for determination of $M$ in the proof of Theorem~\ref{semi:approx}.  A parameter $\mu$ satisfying \eqref{mu:bound} is constructible in view of Theorem~\ref{dec:problem} (by means of iteration procedure which we also used in the previous proofs). An appropriate $l$ can be easily found from inequalities \eqref{a:07.11.15,18:56}, \eqref{a:07.11.15,18:57}, and \eqref{a:07.11.15,18:58}. Thus, for evaluation of $l$ we should first find the parameters $\tau, \alpha,$ and $\rho$ appearing in \eqref{a:07.11.15,18:56}, \eqref{a:07.11.15,18:57}, and \eqref{a:07.11.15,18:58}. The parameters $\alpha$, $\tau$,  and $\gamma$ are determined by means of \eqref{alpha<1},  \eqref{a:07.11.12,17:03}, and \eqref{gamma:bound}.
\end{proof}

One can see that Theorem~\ref{approx:thm2} from the introduction is a straightforward consequence of Theorem~\ref{semi:approx} and Theorem~\ref{07.12.10,11:53}.
	

\begin{remark} 
The parameters $k, l, m, M, \lambda, \mu$ involved in the statements of this section were computed with the help of the Theorem~\ref{dec:problem}. In contrast to this, in general  it is not possible to compute $\cX$ exactly, since evaluation of $\cX$ would involve solving a polynomial system of equations. This explains why in the statement of Theorem~\ref{07.12.10,11:53} the set $\cX$ is taken as a part of the input. 
\end{remark}

\begin{remark} 
	The parameters $\lambda$ and $\mu$ from Lemma~\ref{sublevel:approx:part} and Theorem~\ref{07.12.10,11:53}, respectively, are upper bounds for certain polynomial programs. In fact, by \eqref{07.11.23,17:44} the parameter $\lambda>0$ is a common upper bound for the optimal solutions of $s$ non-linear programs $p_i(x) \rightarrow \max, \ i \in \{1,\ldots,s\},$ with constraints $p_j(x) \ge 0$, $1 \le j \le s.$  From the proof of Theorem~\ref{07.12.10,11:53} we see that $\mu$ can be any number satisfying $\mu \ge \diam(P).$ Hence $\mu^2$ is an upper bound for the optimal solution of the polynomial program $\|x'-x''\|^2 \rightarrow \max, \ x', x'' \in \real^d,$ with $2d$ unknowns (which are coordinates of $x'$ and $x''$) and the $2s$ constraints $p_i(x') \ge 0$ and $p_i(x'') \ge 0$,  $1 \le i \le s.$ The same observations apply also to the parameters $\alpha$ and $\gamma$ from the proof of Theorem~\ref{07.12.10,11:53}, which are used for determination of $l$.        In this respect we notice that upper bounds of polynomial programs can be determined using convex relaxation methods; see \cite{MR1814045}, \cite{MR2011395}, and \cite{MR2142861}.
\end{remark}

\section{Proofs of the main theorems} \label{main:proofs:sect}

Given $s \in \natur$, $k \in \{1,\ldots,s\},$ and $y:=(y_1,\ldots,y_s) \in \real^s$ the \notion[elementary symmetric function]{$k$-th elementary symmetric function} in variables $y_1,\ldots,y_s$  is defined by
\begin{equation} \label{elsym:def} 
 \sigma_k(y) := \sum_{\overtwocond{I \subseteq \{1,\ldots,s\}}{\card{I} = k}} \ \prod_{i \in I} y_i.
\end{equation}
We also put $\sigma_0(y):=1.$ 
\begin{proposition}  \ThmTitle{Bernig 1998} \label{BernigLemma} 
	Let $y:=(y_1,\ldots,y_s) \in \real^s$ with $s \in \natur.$ Then the following statements hold: 
	\begin{enumerate}[I.]
	\item \label{nonstrict:part} $y_1 \ge 0, \ldots, y_s \ge 0$ if and only if $\sigma_1(y) \ge 0, \ldots, \sigma_s(y) \ge 0$.
	\item \label{strict:part} $y_1 > 0, \ldots, y_s > 0$ if and only if $\sigma_1(y) > 0, \ldots, \sigma_s(y) >0$.
	\end{enumerate}
\end{proposition}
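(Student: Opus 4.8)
The plan is to dispatch the two ``only if'' implications directly from the definition and to reduce both ``if'' implications to a single statement about the real roots of an auxiliary univariate polynomial. For the forward directions: if $y_1 \ge 0, \ldots, y_s \ge 0$, then in \eqref{elsym:def} every summand $\prod_{i \in I} y_i$ is a product of nonnegative reals, so $\sigma_k(y) \ge 0$ for each $k$; if instead $y_1 > 0, \ldots, y_s > 0$, then for $1 \le k \le s$ the sum defining $\sigma_k(y)$ is nonempty and each of its summands is positive, so $\sigma_k(y) > 0$. This uses nothing beyond the definition of the elementary symmetric functions.

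For the two converse directions I would introduce the polynomial
\[
	p(t) := \prod_{i=1}^s (t + y_i) = \sum_{k=0}^s \sigma_k(y)\, t^{s-k},
\]
where the second equality is the standard (Vieta) expansion and $\sigma_0(y) = 1$. The point is that, since $y \in \real^s$, the roots of $p$ are precisely the \emph{real} numbers $-y_1, \ldots, -y_s$; hence locating the roots of $p$ on the real line pins down the signs of the $y_i$.

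Now assume $\sigma_1(y) \ge 0, \ldots, \sigma_s(y) \ge 0$. Then for every $t > 0$ one has $p(t) = t^s + \sum_{k=1}^s \sigma_k(y)\, t^{s-k} \ge t^s > 0$, so $p$ has no positive root; since every root of $p$ equals some $-y_i$, this forces $-y_i \le 0$, i.e.\ $y_i \ge 0$, for all $i$, which is Part~\ref{nonstrict:part}. Similarly, if $\sigma_1(y) > 0, \ldots, \sigma_s(y) > 0$, then for every $t \ge 0$ one has $p(t) \ge \sigma_s(y) > 0$, so $p$ has no nonnegative root, whence $-y_i < 0$, i.e.\ $y_i > 0$, for all $i$, which is Part~\ref{strict:part}.

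There is no substantial obstacle: the argument is short. The only step that requires care is the passage from the sign pattern of the coefficients of $p$ to the sign pattern of its roots, and this is exactly where the hypothesis $y \in \real^s$ enters, since it guarantees that $p$ is real-rooted; the coefficient-to-root sign implication would be false without it.
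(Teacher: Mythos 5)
Your proof is correct and follows essentially the same route as the paper: the forward implications from the definition, and the converses via the auxiliary polynomial $\prod_{i=1}^s(t+y_i)$ expanded by Vieta's formulas, concluding that it has no positive (respectively nonnegative) real roots. The only cosmetic difference is that for the strict case the paper invokes Part~\ref{nonstrict:part} together with $\sigma_s(y)>0$ (so $0$ is not a root), whereas you bound $p(t)\ge\sigma_s(y)>0$ directly for $t\ge 0$; both are fine.
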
 
\begin{proof} 
The necessities of both of the parts are trivial. Let us prove the sufficiencies. We introduce the polynomial $f(t) = (t+y_1) \cdot \ldots \cdot (t+y_s),$ whose roots are the the values $-y_1,\ldots -y_s.$ By \notion{Vieta's formulas}, we have $f(t) = \sigma_s(y) \, t^0 + \sigma_{s-1}(y) \, t^1 + \cdots + \sigma_0(y) \, t^s.$ Thus, if $\sigma_i(y) \ge 0$ for every $i \in \{1,\ldots,s\}$, then all coefficients of $f(t)$ are non-negative, while the coefficient at $t^s$ is equal to one. It follows that $f(t)$ cannot have strictly positive roots. Hence $y_i \ge 0$ for all $i \in \{1,\ldots,s\},$  which shows the sufficiency of Part~\ref{nonstrict:part}. Now assume that the strict inequality $\sigma_i(y) > 0$ holds for every $i \in \{1,\ldots,s\}.$ Then $f(0)=\sigma_s(y) >0,$ i.e., zero is not a root of $f(t),$ and, using the sufficiency of Part~\ref{nonstrict:part}, we arrive a the strict inequalities $y_1 > 0,\ldots, y_s >0.$ This shows the sufficiency in Part~\ref{strict:part}.
\end{proof}
Proposition~\ref{BernigLemma} was noticed by Bernig \Bernig[p.~38], who derived it from \notion{Descartes' Rule of Signs}. Our elementary proof (slightly) extends the arguments given in \AverkovHenk. 

\begin{lemma} \label{determ:n}
	Let $p_1,\ldots,p_s \in \real[x]$ and $P:=(p_1,\ldots,p_s)_{\ge 0}.$ Assume that $P$ is non-empty and bounded. Then there exists an algorithm which gets $p_1,\ldots,p_s$ and returns $n(p_1,\ldots,p_s).$ 
\end{lemma}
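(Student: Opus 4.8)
The plan is to reduce the computation of $n:=n(p_1,\ldots,p_s)$ to finitely many applications of the decision procedure from Theorem~\ref{dec:problem}. The starting point is the elementary reformulation
$$
	n = \max \bigsetcond{ \card{I} }{ I \subseteq \{1,\ldots,s\}, \ \exists\, x \in P \text{ with } p_i(x) = 0 \text{ for all } i \in I }.
$$
Indeed, if $x \in P$ realizes $\card{I_x(p_1,\ldots,p_s)} = n$ in \eqref{07.11.22,17:54}, then the set $I:=I_x(p_1,\ldots,p_s)$ has cardinality $n$ and the corresponding $x$ witnesses the membership of $I$ in the family on the right-hand side; conversely, any admissible $I$ in the displayed formula satisfies $I \subseteq I_x(p_1,\ldots,p_s)$ for the corresponding point $x$, whence $\card{I} \le n$. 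Since $P \ne \emptyset$, the family on the right-hand side is non-empty (it contains $I = \emptyset$), so the maximum is well defined; boundedness of $P$ is not actually needed for this particular statement, only non-emptiness.

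Next, for each subset $I \subseteq \{1,\ldots,s\}$ I would form the sentence
$$
	\Phi_I := (\exists x_1) \cdots (\exists x_d) \Biggl( \bigwedge_{i \in I} \bigl( p_i(x_1,\ldots,x_d) = 0 \bigr) \wedge \bigwedge_{i=1}^s \bigl( p_i(x_1,\ldots,x_d) \ge 0 \bigr) \Biggr),
$$
which is a sentence over the language of ordered fields with coefficients in $\real$ and which is true precisely when there exists $x \in P$ with $p_i(x) = 0$ for all $i \in I$. By Theorem~\ref{dec:problem} there is an algorithm deciding the truth of $\Phi_I$. The algorithm for Lemma~\ref{determ:n} then enumerates the $2^s$ subsets $I$, tests each $\Phi_I$ with this decision procedure, and returns the largest value of $\card{I}$ among those $I$ for which $\Phi_I$ is true; by the reformulation above this value equals $n$. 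There are only finitely many subsets, so the algorithm terminates. (To avoid the full enumeration, one may instead scan $j = s, s-1, \ldots, 0$ and, for each $j$, test $\Phi_I$ for the $\binom{s}{j}$ sets $I$ of size $j$, stopping at the first $j$ for which a test succeeds.)

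I do not expect a genuine obstacle here: the only point requiring care is the elementary reformulation of $n$ in the first paragraph, after which the statement is an immediate consequence of the Tarski--Seidenberg decision procedure applied to the finitely many sentences $\Phi_I$. The remaining details — that each $\Phi_I$ is indeed a well-formed sentence and that its truth value has the claimed meaning relative to $P=(p_1,\ldots,p_s)_{\ge 0}$ and $I_x$ in \eqref{07.11.22,17:53} — are routine.
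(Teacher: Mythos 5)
Your proposal is correct and follows essentially the same route as the paper: both reduce the computation of $n(p_1,\ldots,p_s)$ to finitely many truth queries, decided via Theorem~\ref{dec:problem}, about the existence of a point of $P$ at which a prescribed collection of the $p_i$ vanishes, and both terminate by finiteness of the search. The only difference is bookkeeping: you issue one sentence per subset $I$ (up to $2^s$ queries), whereas the paper packs all subsets of a fixed cardinality $n$ into a single sentence through the condition $\prod_{\card{J}=n}\sum_{j\in J}p_j(x)^2=0$ and iterates over $n$, using at most $s$ queries; your side remark that boundedness of $P$ is not actually needed here is also accurate.
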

\begin{proof}
	Since $P$ is bounded, we have $n(p_1,\ldots,p_s) \le 1.$ We suggest the following procedure for evaluation of $n(p_1,\ldots,p_s).$
	\begin{description} 
		\item[Procedure:] Evaluation of $n(p_1,\ldots,p_s)$
		\item[Input:] $p_1,\ldots,p_s \in \real[x].$
		\item[Output:] $n(p_1,\ldots,p_s)$
	\end{description}  
	\begin{enumerate}[1:]
		\item For $i =1,\ldots,s$ introduce the formula $$\Phi_i:= " p_i(x_1,\ldots,x_d) \ge 0"$$
		with free variables $x_1,\ldots,x_d.$ 
		\item Set $n:=1.$ 
		\item \label{iter:begin} Introduce the formula
		$$
		 \Phi:="\prod_{\overtwocond{J \subseteq \{1,\ldots,s\}}{\card{J} = n}} \, \, \sum_{j \in J} p_j(x_1,\ldots,x_d)^2 = 0"
		$$
		with free variables $x_1,\ldots,x_d.$ 
		\item Verify whether the sentence 
		$$
			\Psi:= "(\exists x_1) \ldots (\exists x_d) \ \Phi \wedge \Phi_1 \wedge \ldots \wedge \Phi_s" 
		$$
		is true or not.
		\item If $\Psi$ is true and $n<s,$ set $n:=n+1$ and go to Step~\ref{iter:begin}.
		\item If $\Psi$ is true and $n=s,$ return $n$ and stop. 
		\item If $\Psi$ is false, set $n:=n-1$, return $n$, and stop 
	\end{enumerate} 
	It is not hard to see that the above procedure terminates in a finite number of steps and returns $n(p_1,\ldots,p_s).$ 
\end{proof} 

\begin{proof}[Proof of Theorem~\ref{main:n+1}]
		As in the previous proofs, we first show the existence of $q_0,\ldots, q_n$ from the assertion and then discuss the algorithmic part. We define $q_i, \ 0 \le i \le n,$ by the formula
	\begin{equation} \label{07.12.04,10:58}
		q_i(x)  :=   \begin{cases} 1-g_{M,\lambda,k}(x) & \mfor i=0,  \\ \sigma_{s-n+i}(p_1(x),\ldots,p_s(x)) & \mfor 1 \le i \le n, \end{cases} 
	\end{equation}
	where $k \in \natur$, $M \in \natur_0$, and $\lambda>0$ will be fixed later. (We recall that $g_{M,\lambda,k}(x)$ is defined by \eqref{g:def}.)
	Let us first present a brief sketch of our arguments. It turns out that the polynomials $q_1,\ldots,q_{n},$ which are defined with the help of elementary symmetric functions, represent $P$ locally, that is,  $P$ and $(q_1,\ldots,q_{n})_{\ge 0}$ coincide in a neighborhood of $P.$ In order to pass to the global representation, the additional polynomial $q_0$ is  chosen in such a way that the sublevel set $(q_0)_{\ge 0}$ approximates $P$ sufficiently well. 

		Given $\eps>0$ let us consider the set $P(M,\eps)$ defined by \eqref{S:eps:def}. By Theorem~\ref{semi:approx} there exist $M \in \natur_0$  and $\eps_0>0$ such that $P(M,\eps_0)$ is bounded.  Since $n<s$ it follows that $\sigma_i(p_1(x),\ldots,p_s(x))>0$ for all $x \in P$ and $1 \le i \le s-n.$ Thus, the above strict inequalities hold also for $x$ in a small neighborhood of $P.$ Consequently, by Theorem~\ref{semi:approx}, we can fix an $\eps \in (0,\eps_0]$  such that $\sigma_i(p_1(x),\ldots,p_s(x))>0$ for all $x \in P(M,\eps)$ and $1 \le i \le s-n.$ We define the sets
	\begin{align*} 
	\begin{array}{lcr}
	Q := \setcond{ x\in \real^d}{q_i(x) \ge 0 \mfor 0 \le i \le n} &\mand& Q_0 := \setcond{ x \in \real^d}{q_i(x) > 0 \mfor 0 \le i \le n}.
	\end{array}
	\end{align*}
	Let us consider an arbitrary $x \in P.$ Obviously, $q_i(x) \ge 0$ for $1 \le i \le n,$ where all inequalities are strict if $x \in P_0.$ 
	Assume that $\lambda$ and $k$ satisfy \eqref{07.11.23,17:44} and \eqref{m:eps:cond}. Then, by Lemma~\ref{sublevel:approx:part}, $q_0(x) \ge 0,$ where the inequality is strict if $x \in P_0.$ Hence $P \subseteq Q$  and $P_0 \subseteq Q_0.$ Let us show the reverse inclusions. Let $x \in Q_0.$ Then, by the definition of $q_0,\ldots,q_n,$ we have $\sigma_i(p_1(x),\ldots,p_s(x)) > 0$ for $s-n +1 \le i \le s$ and $g_{M,\lambda,k}(x) < 1.$ But, by the choice of $\eps$ and $g_{M,\lambda,k}(x)$, we also have $\sigma_i(p_1(x),\ldots,p_s(x)) > 0$ for $1 \le i \le s-n.$ Thus, $\sigma_i(p_1(x),\ldots,p_s(x)) > 0$ for $1 \le i \le s,$ and, in view of Proposition~\ref{BernigLemma}(\ref{strict:part}), we have $p_i(x) > 0$ for $1 \le i \le s.$ This shows the inclusion $Q_0 \subseteq P_0.$ The inclusion $Q \subseteq P$ can shown analogously (by means of  Proposition~\ref{BernigLemma}(\ref{nonstrict:part})). 

	Finally we discuss the constructive part of the statement. By Lemma~\ref{determ:n}, $n$ is computable. Consequently, the polynomials $q_1,\ldots,q_n$ are also computable, since they are arithmetic expressions in $p_1,\ldots,p_s.$ The computability of $q_0$ follows from directly from Theorem~\ref{semi:approx}. 
\end{proof}

\begin{proof}[Proof of Theorem~\ref{main:n}] 
	The polynomials $q_1,\ldots, q_i$ will be defined by 
	\begin{equation} \label{08.04.02,14:45}
		q_i(x)  := \begin{cases} \sigma_{s-n+1}(p_1(x),\ldots,p_s(x)) -   g_{M,\lambda,k}(x)^l  h_\mu(x)^m & \mfor i=1,  \\ \sigma_{s-n+i}(p_1(x),\ldots,p_s(x)) & \mfor 2 \le i \le n, \end{cases} 
	\end{equation}
	where $k, l, m \in \natur, \ M \in \natur_0, \ \lambda >0, \ \mu >0$  will be fixed below.

We give a rough description of the arguments. We start with the same remark as in the proof of Theorem~\ref{main:n+1}. Namely, polynomials $\sigma_j(p_1(x),\ldots,p_s(x))$ with $s-n +1 \le j \le s$ represent $P$ locally. We shall disturb the polynomial $\sigma_{s-n+1}(p_1(x),\ldots,p_s(x))$ by subtracting an appropriate non-negative polynomial $g_{M,\lambda,k}(x)^l h_\mu(x)^m$ which is small on $P$, has high order zeros at the points of $X,$ and is large for all points $x$ sufficiently far away from $P.$  See also Fig.~\ref{Fig:semi-alg:2d} for an illustration of Theorem~\ref{main:n} in the case $d=2.$

	We first show the existence of $q_1,\ldots,q_n$ from the assertion. Given $\eps>0,$ let us consider the set $P(M,\eps)$ defined by \eqref{S:eps:def}. By Theorem~\ref{semi:approx} there exist $M \in \natur_0$  and $\eps_0>0$ such that $P(M,\eps_0)$ is bounded.  Since $n<s$ it follows that $\sigma_i(p_1(x),\ldots,p_s(x))>0$ for all $x \in P$ and $1 \le i \le s-n.$ Thus, the above strict inequalities hold also for $x$ in a small neighborhood of $P.$ Consequently, by Theorem~\ref{semi:approx}, we can fix $\eps \in (0,\eps_0/2]$  such that $\sigma_i(p_1(x),\ldots,p_s(x))>0$ for all $x \in P(2  \eps)$ and $1 \le i \le s-n.$ Let us borrow the notations from the statements of Theorems~\ref{semi:approx} and \ref{07.12.10,11:53}.

	We set $q_1:=q$ with $q \in \real[x]$ as in Theorem~\ref{07.12.10,11:53}. Define the semi-algebraic sets
	\begin{align*} 
	\begin{array}{lcr}
		Q = (q_1,\ldots,q_n)_{\ge 0} &\mand& Q_0 :=(q_1,\ldots,q_n)_{>0}.
	\end{array}
	\end{align*}
	
	Let us consider an arbitrary $x \in P.$ Obviously, $q_i(x) \ge 0$ for $2 \le i \le n,$ where all inequalities are strict if $x \in P_0.$ Furthermore, by Theorem~\ref{07.12.10,11:53} we also have $q_1(x) \ge 0$ and this inequality is strict if $x \in P_0.$ Thus, we get the inclusions $P \subseteq Q$ and $P_0 \subseteq Q_0.$ 

	It remains to verify the inclusions $Q \subseteq P$ and $Q_0 \subseteq P_0.$ Let us consider an arbitrary $x \in \real^d \setminus P_0,$ that is, for some $i \in \{1, \ldots,s\}$ one has $p_i(x) \le 0.$ If $x \in P(2 \eps) \setminus P_0,$ then, by the choice of $\eps,$ $\sigma_i(p_1(x),\ldots,p_s(x)) > 0$ for all $1 \le i \le s-n.$ But, on the other hand, by Proposition~\ref{BernigLemma}(\ref{strict:part}), $\sigma_j(p_1(x),\ldots,p_s(x)) \le 0$ for some $1 \le j \le s.$ Hence we necessarily have $j > s-n$, and we get that $q_{j+n-s}(x) \le 0.$ Consequently $x \in \real^d \setminus Q_0.$  Now assume $x \in P(2 \eps) \setminus P.$ Then, by Proposition~\ref{BernigLemma}(\ref{nonstrict:part}), $\sigma_j(p_1(x),\ldots,p_s(x)) < 0$ for some $1 \le j \le s.$ But, in the same way as we showed above, we deduce that $j > s-n.$ Hence $q_{j+n-s}(x) < 0,$ which means that $x \in \real^d \setminus Q.$ If $x \in \real^d \setminus P(2 \eps),$ then, by Theorem~\ref{07.12.10,11:53}, one has $q_1(x) <0,$ and by this $x  \in \real^d \setminus Q.$ 

	As for the algorithmic part of the assertion, we notice that $n=n(p_1,\ldots,p_s)$ can be easily computed from $X.$ The computability of $q_1$ follows from Theorem~\ref{07.12.10,11:53}. 
\end{proof}

\begin{remark} 
We mention that the ``combinatorial component'' of our proofs (dealing with elementary symmetric functions) resembles in part the proof of Theorem~\ref{AveHenkThm}. However, the crucial parts of the proofs of Theorems~\ref{main:n+1} and Theorem~\ref{main:n} concerning the approximation of $P$ are based on different ideas. The polynomials $q_1,\ldots,q_d$ from Theorem~\ref{AveHenkThm} can be computed in a rather straightforward way; see \AverkovHenk[Section~4]. In contrast to this, the constructive parts of the proofs of Theorems~\ref{main:n+1} and \ref{main:n} use decidability of the first order logic over reals and, by this, lead to algorithms of extremely high complexity. Even though Theorem~\ref{Loj} and Proposition~\ref{BernigLemma} were also used in \Bernig, our proofs cannot be viewed as extensions of the proofs from \Bernig.
\end{remark}

\section*{Acknowledgements} I am indebted to Prof. Martin Henk for his support during the preparation of the manuscript. The examples in  Remark~\ref{bounded:extension:remark} arose from a discussion with Prof. Claus Scheiderer.


\bibliographystyle{amsalpha}
\providecommand{\bysame}{\leavevmode\hbox to3em{\hrulefill}\thinspace}
\providecommand{\MR}{\relax\ifhmode\unskip\space\fi MR }
\providecommand{\MRhref}[2]{%
  \href{http://www.ams.org/mathscinet-getitem?mr=#1}{#2}
}
\providecommand{\href}[2]{#2}

 \begin{tabular}{l}
        \textsc{Gennadiy Averkov, 
	Universit\"atsplatz 2, Institut f\"ur Algebra und Geometrie,
        } \\
        \textsc{Fakult\"at f\"ur  Mathematik, Otto-von-Guericke-Universit\"at Magdeburg,}
	\\
	\textsc{D-39106 Magdeburg}
	\\
        \emph{e-mail}: \texttt{gennadiy.averkov@googlemail.com} \\
	\emph{web}: \texttt{fma2.math.uni-magdeburg.de/$\sim$averkov}
    \end{tabular} 
 \end{document}